\documentclass[a4paper,10pt]{article}
\usepackage{a4}
\usepackage[ansinew]{inputenc}
\usepackage{epsfig}
\usepackage{amssymb}
\usepackage{amsmath}
\usepackage{amsthm} %das tut auf dem laptop nicht!!
\usepackage{amscd}
\usepackage{enumerate}
\usepackage{float}
\usepackage{xpatch}
%\usepackage[notref]{showkeys}
%\DeclareSymbolFontAlphabet{\Bbb}{AMSb} \setlength{\parindent}{0pt} \setlength{\parskip}{9pt plus 2pt minus 1pt}
%\frenchspacing
%\sloppy \oddsidemargin0.5cm \textwidth16.3cm \textheight26cm
%\topskip-0.5cm\footskip2cm
%\topmargin-2cm 

\makeatletter
\xpatchcmd{\proof}{\@addpunct{.}}{\normalfont\,\@addpunct{:}}{}{}
\makeatother

  \newtheoremstyle{dotless}{}{}{\itshape}{}{\bfseries}{:}{ }{}

    \newtheoremstyle{dotlessrem}{}{}{}{}{\bfseries}{:}{ }{}

  \theoremstyle{dotless}

\newtheorem{lem}{Lemma}[section]
\newtheorem{theo}[lem]{Theorem}
\newtheorem*{cor}{Corollary}
 \theoremstyle{dotlessrem}
\newtheorem*{rem}{Remark}

\renewenvironment{abstract}
 {\small
  \begin{center}
  \bfseries \abstractname\vspace{-.5em}\vspace{0pt}
  \end{center}
  \list{}{
    \setlength{\leftmargin}{1.5cm}%
    \setlength{\rightmargin}{\leftmargin}%
  }%
  \item\relax}
 {\endlist}

\begin{document}
\begin{center} \Large Spectral asymptotics on the Hanoi attractor\\
\large Elias Hauser\footnote{Institute of Stochastics and Applications, University of Stuttgart, Pfaffenwaldring 57, 70569 Stuttgart, Germany, E-mail: elias.hauser@mathematik.uni-stuttgart.de}\end{center}
\begin{abstract}
The Hanoi attractor (or Stretched Sierpi\'nski Gasket) is an example of a non self similar fractal that still exhibits a lot of symmetry. The existence of various symmetric resistance forms on the Hanoi attractor was shown in 2016 by Alonso-Ruiz, Freiberg and Kigami \cite{afk17}. To get self adjoint operators from these resistance forms we have to choose a locally finite measure. The goal of this paper is to calculate the leading term for the asymptotics of the eigenvalue counting function from these operators.
\end{abstract}
\section{Introduction}\label{chap0}
The goal in this paper is to calculate the the leading term in the asymptotics of the eigenvalue counting function for a class of operators on the Hanoi attractor, which is a non self similar set. \\
Spectral asymptotics is an important tool in physics, for example to calculate how heat or waves propagate through media. For bounded domains $\Omega\subset \mathbb{R}^n$ the Dirichlet Laplacian has non negative discrete spectrum. The eigenvalue counting function $N_D^\Omega(x)$ has the following asymptotic behaviour
\begin{align}
N_D^\Omega(x)=\frac{\tau_n}{(2\pi)^n}\operatorname{Vol}_n(\Omega) x^{\frac n2}+ o(x^{\frac n2})
\end{align}
where $\tau_n$ is the volume of the unit ball in $\mathbb{R}^n$. This result is originally due to Weyl \cite{we11}. In the 70s the interest in fractals grew thanks to Mandelbrot. Their fine structure can be usefull in a better modelling of many naturally occuring structures and processes. To be able to do analysis on fractals one needs a Laplacian. One way to construct this operator is called the analytical approach which is due to Kigami. In \cite{kig89} he defined the Laplacian on the Sierpi\'nski Gasket as the limit of renormalized discrete Laplacians on approximating graphs. The intuitiv generalization of (1) to Laplacians on fractals would be
\begin{align}
N_D^F(x)=C_d\mathcal{H}^d(F)x^{\frac d2}+o(x^{\frac d2})
\end{align}
where $d=\dim_H(F)$ is the Hausdorff-Dimension of $F$, $\mathcal{H}^d(F)$ the $d$-dimensional Hausdorff-measure of $F$ and $C_d$ a constant that only depends on $d$. This was conjectured by Berry in \cite{ber1} and \cite{ber2}. However this turned out to be false. Shima \cite{shim} and Fukushima-Shima \cite{fushim} calculated the eigenvalues of the Laplacian on the Sierpi\'nski Gasket via the eigenvalue decimation method. The leading term in the asymptotics is $\frac 12 \frac{\ln 9}{\ln5}$ which does not coincide with the Hausdorff-Dimension. Another discovery was, that there is no constant in the asymptotic behaviour in front of the leading term but periodic behaviour. Later Kigami and Lapidus calculated the leading term for a class of fractals in \cite{kl93}, namely p.c.f. self similar fractals. Another generalization is from Kajino \cite{kaj10}, where he calculated the leading term for self similiar fractals in general.

The Hanoi attractor is a non self similar set, that still exhibits a lot of symmetry. In \cite{af12} the set was analyzed geometrically by Alonso-Ruiz and Freiberg by calculating its Hausdorff-Dimension. This set got its name by the connection to the game "The towers of Hanoi", which can also be found in \cite{af12}.

Let $p_1,p_2,p_3$ be the vertex points of a equilateral triangle with side length 1 and for $\alpha \in(0,1)$
\begin{align*}
G_i(x)&:=\frac{1-\alpha}2(x-p_i) +p_i, \ i\in \{1,2,3\}=:\mathcal{A}	\\[0.1cm]
e_1&:=\{\lambda G_2(p_3)+(1-\lambda)G_2(p_3) \ : \lambda \in  (0,1)\}\quad e_2,e_3 \ \text{analog}
\end{align*}
Then there exists a unique compact set $K_\alpha$ with
\begin{align*}
K_\alpha = G_1(K_\alpha) \cup G_2(K_\alpha) \cup G_3(K_\alpha) \cup e_1\cup e_2\cup e_3
\end{align*}
This set is called \textit{Hanoi attractor} or \textit{Stretched Sierpi\'nski Gasket (SSG)} since the contraction ratios are smaller than the ones of the Sierpi\'nski Gasket and the gaps are filled with one-dimensional lines. Also define $\Sigma_\alpha$ as the unique solution to
\begin{align*}
\Sigma_\alpha=G_1(\Sigma_\alpha)\cup G_2(\Sigma_\alpha)\cup G_3(\Sigma_\alpha)
\end{align*}

The sets $K_\alpha$ for $\alpha\in(0,1)$ are pairwise homeomorphic \cite[Prop. 2.4]{afk17} and since the resistance forms only depend on the topology of $K_\alpha$ we can omit the parameter $\alpha$ in the notation.
\begin{figure}[H]
\centering
\includegraphics[scale=0.1]{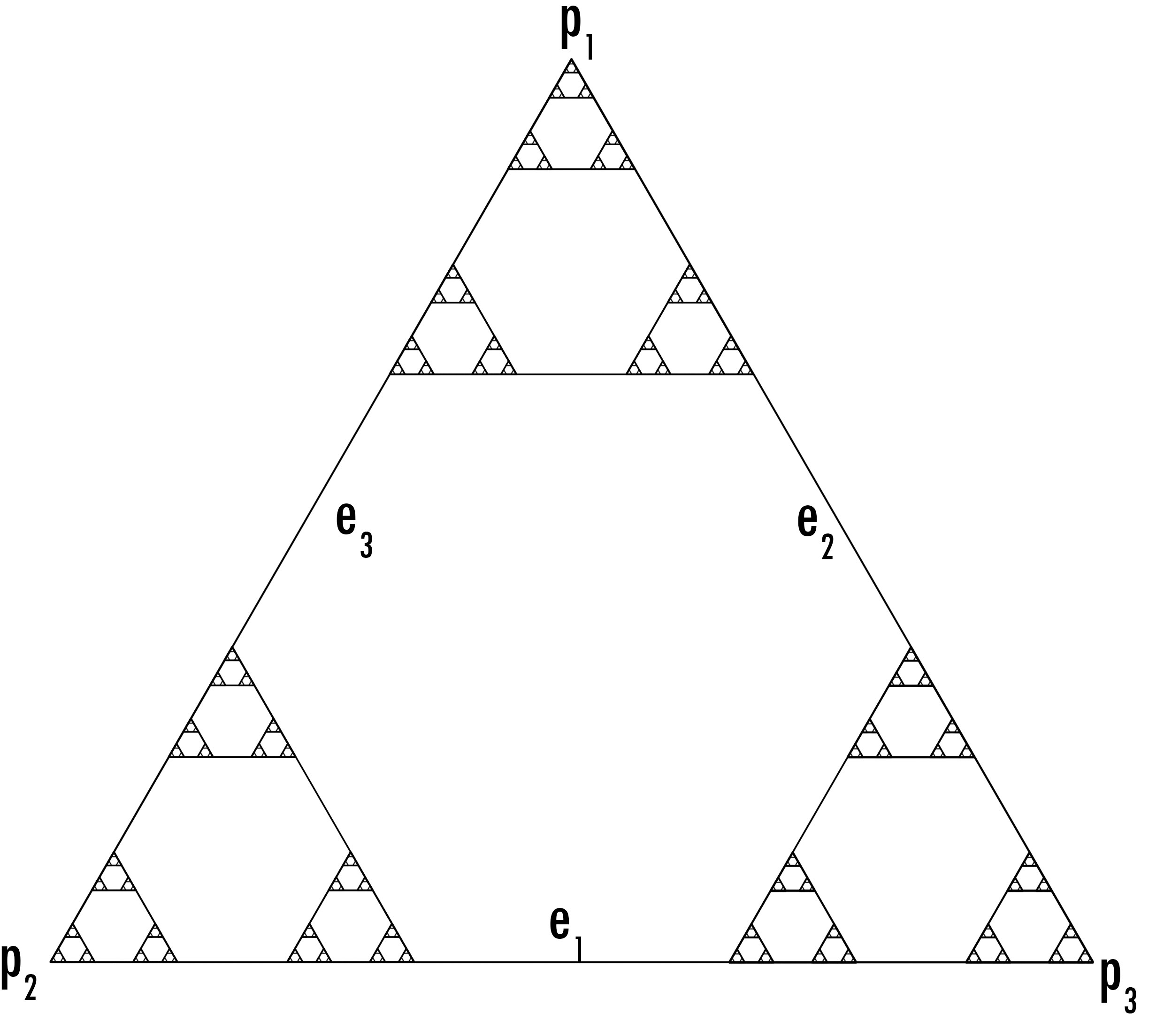}
\caption{Hanoi attractor}
\label{hanoi}
\end{figure}
We need to introduce some common notation. Let $\mathcal{A}:=\{1,2,3\}$, for $w\in\mathcal{A}^m$ with $m\in\mathbb{N}_0$:
\begin{itemize} 
\item $G_w=G_{w_1}\circ\ldots \circ G_{w_m}$ (with $G_w=\operatorname{id}$ for the empty word $w\in\mathcal{A}^0$)
\item $V_0:=\{p_1,p_2,p_3\}$,  $V_m:=\bigcup_{w\in \mathcal{A}^m}G_w(V_0)$ 
\item $e_i^w:=G_w(e_i)$
\item $K_w:=G_w(K)$,  $K_m:=\bigcup_{w\in\mathcal{A}^m}K_w$
\item $J_m:=K\backslash K_m$
\item $\Sigma_w:=G_w(\Sigma)$, $\Sigma_m:=\bigcup_{w\in\mathcal{A}^m}\Sigma_w$
\end{itemize}
We refer to $\Sigma$ as the fractal part and to $J=K\backslash \Sigma$ as the line part of $K$.\\[0.2cm]

There are two prior works concerning spectral asymptotics on the Hanoi attractor. The first is also by Alonso-Ruiz and Freiberg \cite{af13}. There they constructed a Dirichlet form and calculated the leading term in the asymptotics of the eigenvalue counting function of the associated operator. This leading term turns out to be the same value as for the Sierpi\'nski Gasket, namely $\frac{\ln 3}{\ln 5}$. The resistance form used corresponds to one coming from a fixed sequence of matching pairs (see chapter \ref{chap1})and the measure is the sum of the normalized Hausdorff-measure on the self similar part and a scaled lebesgue measure on the line part (compare to chapter \ref{chap4}). However the scaling parameter is chosen in such a way, that the influence of these edges is not too big.

Another work is by Alonso-Ruiz, Kelleher and Teplyaev \cite{akt16}. The approach in this work is by the use of quantum graphs. The Hanoi attractor is viewed as a so called fractal quantum graph. The measure used on the one-dimensional edges is more general than the one in \cite{af13}, however there is no mass on the higher dimensional fractal part. Therefore the calculated leading term in the asymptotics of the eigenvalue counting function turns out to be smaller than $\frac{\ln 3}{\ln 5}$.

In this work we combine the two works and generalize them to a class of resistance forms. These resistance forms were introduced by Alonso-Ruiz, Freiberg and Kigami in \cite{afk17}. These resistance forms consist of two parts. One belongs to the higher dimensional part of the Hanoi attractor and is very similar to the resistance form of the Sierpi\'nski Gasket. The other one belongs to the one-dimensional edges. As mentioned the choice of the resistances is not unique. Therefore there exists a whole class of resistance forms on the Hanoi attractor. In \cite{afk17} the authors treat the so called \textit{completely symmetric} ones, that exhibit the intuitiv symmetries. This term is defined in the work. It is also shown that each of the \textit{completely symmetric resistance forms} is of the discussed art.

In the current work we use these resistance forms and choose a suitable measure to get regular Dirichlet forms and thus self adjoint operators with non negative discrete spectrum. This spectrum can be analyzed in terms of the eigenvalue counting function and its asymptotic behaviour. 

This paper is organized as follows. In chapter \ref{chap1} the construction of the resistance forms from \cite{afk17} is briefly discussed. To be able to do the calculations we need to set some conditions on the resistances. These conditions are introduced in chapter \ref{chap2} following some important estimates for the resistance forms. In chapter \ref{chap3} the Hausdorff-Dimension of the Hanoi attractor is calculated with respect to resistance metric coming from a resistance form that fulfills the conditions. This value is more usefull for the analysis of a set, than the one calculated with respect to the euclidean metric. In chapter \ref{chap4} the measures that are used are introduced. After stating the results of this work in chapter \ref{chap5}, the proofs follow in chapter \ref{chap6}. This paper closes in chapter \ref{chap7} with some generalizations on the conditions.

\section{Recapitulation of Hanoi attractor and resistance forms}\label{chap1}
To be able to study analysis on the Hanoi attractor we need to introduce a resistance form on $K$. A definition of resistance forms can be found in \cite{kig12}. The choice of the resistance form is not unique and so we get different operators and different spectral asymptotics. The construction of these resistance forms was carried out in \cite{afk17}. The following paragraph will include a brief recapitulation of this construction.
\begin{figure}[H]
\centering
\includegraphics[scale=0.15]{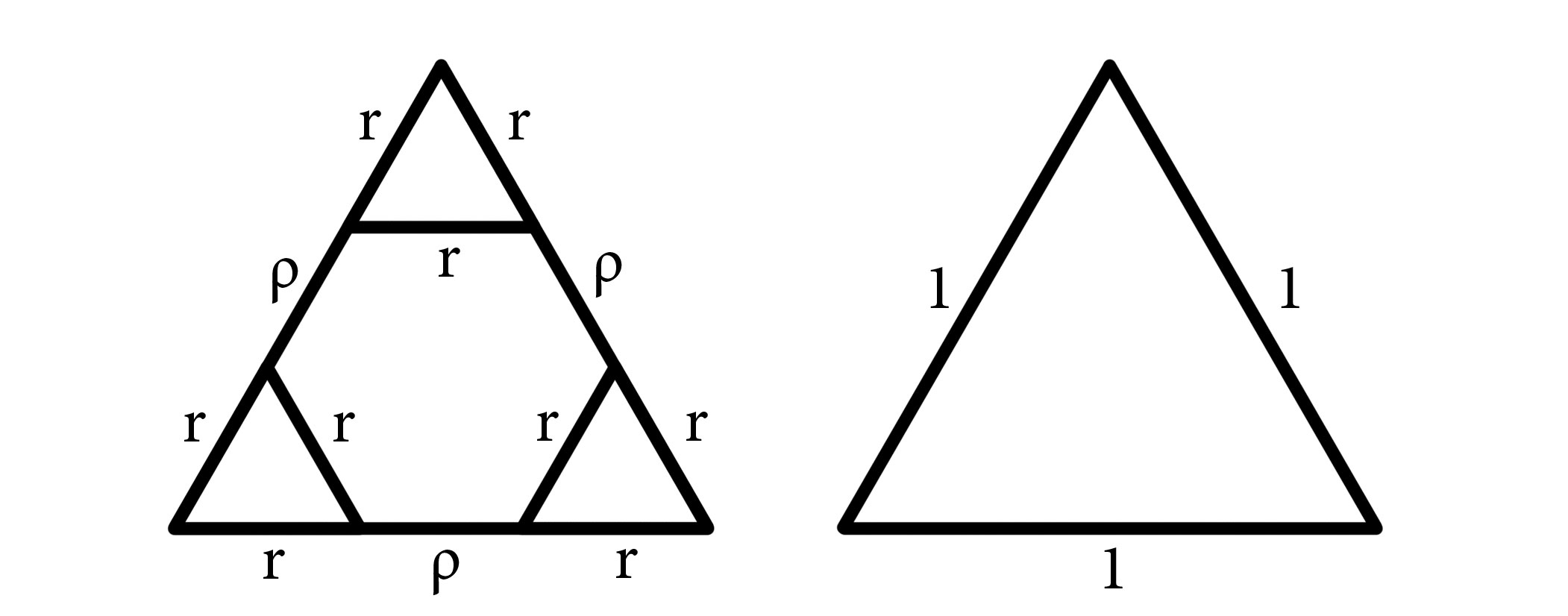}
\caption{Resistances}
\label{resis}
\end{figure}
In Figure \ref{resis} you can see the first graph approximation of $K$ beside the graph that just contains the vertices $p_1,p_2$ and $p_3$. Due to symmetry we want to have the resistances on the smaller triangles all equal $r$ and also all equal $\rho$ on the edges adjoining them. This electric network should be equivalent to the one on the right with all resistances equal $1$. A quick calculation with the help of the $\Delta-Y$-transformation leads to 
\begin{align*}
\frac 53 r+\rho=1
\end{align*}
Such a pair $(r,\rho)$ is then called a matching pair. In the next graph approximation the smaller triangles get divided further in the same fashion. 
\begin{figure}[H]
\centering
\includegraphics[scale=0.15]{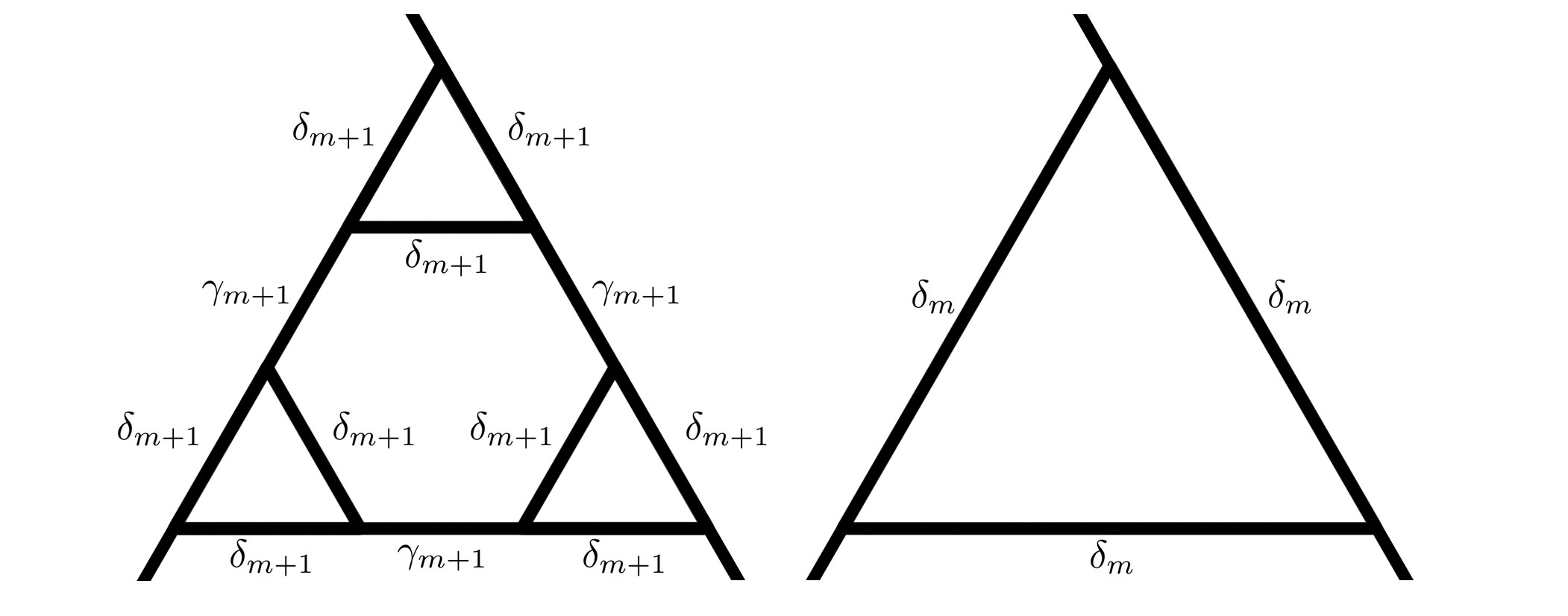}
\caption{Resistances in the $m+1$ graph approximation}
\label{resis2}
\end{figure}
In general in the $m+1$ graph approximation the left triangle in Figure \ref{resis2} has to be equivalent to the right one with all resistances $\delta_m$. 
The same calculation as for the first graph approximation shows, that it has to hold that 
\begin{align*}
\delta_{m+1}=\delta_m\cdot r_{m+1} \hspace*{0.5cm} \text{and} \hspace*{0.5cm} \gamma_{m+1}=\delta_m\cdot\rho_{m+1}
\end{align*}
with a matching pair $(r_{m+1},\rho_{m+1})$ i.e. $\frac 53 r_{m+1}+ \rho_{m+1}=1$. Notice that the resistances of the edges connecting adjoining cells from the previous graph approximations do not change.\\[.2cm]
We get for the $m$-th graph approximation, that
\begin{align*}
\delta_m=r_1\cdots r_m\hspace*{0.5cm} \text{and} \hspace*{0.5cm} \gamma_m=r_1\cdots r_{m-1} \rho_m
\end{align*}
with $\frac 53 r_i+\rho_i=1$ for all $i$. Such a sequence $\mathcal{R}=(r_i,\rho_i)_{i\geq 1}$ of matching pairs is also called a compatible sequence because each of those sequences will lead to a resistance form on $K$. 

With these resistances we can define a quadratic form. This form will consist of two parts. One part is very similar to the usual resistance form on the Sierpi\'nski Gasket. For $u\in \ell(K)$ define
\begin{align*}
Q_0^\Sigma(u,u)&:=(u(p_1)-u(p_2))^2+(u(p_2)-u(p_3))^2+(u(p_3)-u(p_1))^2\\
Q_m^\Sigma(u,u)&:=\sum_{w\in\mathcal{A}^m} Q_0^\Sigma(u\circ G_w,u\circ G_w)\\[0.2cm]
\mathcal{E}^\Sigma_{\mathcal{R}}(u,u)&:=\lim_{m\rightarrow \infty} \frac 1{\delta_m}Q_m^\Sigma (u,u)
\end{align*}
	
However this form ignores the adjoining edges of the Hanoi attractor. To get a form on the whole $K$ we need a second part. This can be achieved with the usual one-dimensional Dirichlet energy summed over all edges. With $\xi_{e^w_i}(t)=(1-t)(e^w_i)_-+t(e^w_i)^+$, $t\in (0,1)$, where $(e^w_i)_-$ and $(e^w_i)^+$ are the endpoints of $e^w_i$,  define
\begin{align*}
\mathcal{D}^I_k(u,u)&:=\sum_{ \begin{array}{c} w\in\mathcal{A}^{k-1}\\ i\in\{1,2,3\}\end{array} } \int_0^1 \left(\frac{d(u\circ \xi_{e^w_i})}{dx}\right)^2 dx\\[0.1cm]
\mathcal{E}^I_{\mathcal{R}}(u,u)&:=\sum_{k=1}^\infty \frac 1{\gamma_k} \mathcal{D}^I_k(u,u)
\end{align*}
Now we define the sum of the two parts as our final quadratic form:
\begin{align*}
\mathcal{E}_{\mathcal{R}}(u,u):=\mathcal{E}^\Sigma_{\mathcal{R}}(u,u)+ \mathcal{E}^I_{\mathcal{R}}(u,u)
\end{align*}
The form $\mathcal{E}_{\mathcal{R}}$ is defined on
\begin{align*}
\mathcal{F}_{\mathcal{R}}=\left\{u\in C(K) : \mathcal{E}_{\mathcal{R}}(u,u)<\infty, \  u|_{e^w_i}\in H^1(e_i^w), \forall w\in\mathcal{A}^m, m\in\mathbb{N}_0\right\}
\end{align*}
where $H^1(e_i^w)=\{u\in \ell(e_i^w), u\circ \xi_{e_i^w}\in H^1(0,1)\}$. One of the main results of \cite{afk17} is that for a sequence of matching pairs $\mathcal{R}=(r_i,\rho_i)_{i\geq 1}$ the form $(\mathcal{E}_{\mathcal{R}},\mathcal{F}_{\mathcal{R}})$ is indeed a regular resistance form.

The construction of these resistance forms can be studied in much greater detail in \cite{afk17}.

\section{Conditions and estimates of the resistance forms}\label{chap2}
The goal is to study the Hanoi attractor. One way to analyse a set is to study its geometric properties. Probably the most significant geometric value is the Hausdorff-Dimension. To calculate it, we have to choose a metric. Since the Hanoi attractor can be embedded in the $\mathbb{R}^2$ we could choose the euclidean metric. This value was calculated in \cite{af12}. However it depends on $\alpha$. Since the resistance forms do not depend on $\alpha$ we want to choose another metric that shares this characteristic. The resistance metric only depends on the resistances, therefore only on the sequence of matching pairs $\mathcal{R}$, and furthermore it reflects the analysis of the set much better.\\
We also want to study the analysis of the Hanoi attractor, in particular the spectral asymptotics.

In the self similar case, these calculations can be done with the help of the self similar scaling properties of the resistance forms. Since we are not in the self similar case we do not have these tools. We are able to get some estimates on the resistance forms, but to be able to do these calculations we need to introduce some conditions on the sequences of matching pairs.\\

\textbf{Conditions} on the sequences of matching pairs $\mathcal{R}=(r_i,\rho_i)_{i\geq 1}$:\\[0.2cm]
$\frac 13\leq  r<\frac 35$:
\begin{align*}
\sum_{i=1}^\infty | r- r_i|<\infty 
\end{align*}
For $r=\frac 35$
\begin{align*}
\sum_{i=1}^\infty \rho_i<\infty , \ \text{ and } (r_i)_i \text{ mon. increasing}
\end{align*}
These conditions ensure, that $r_i\rightarrow r$ fast enough. For $r=\frac 35$ we need the additional condition, that the convergence is monotone.

With these conditions we get estimates for the rescaling of the resistance forms. However the next lemma holds for all sequences of matching pairs where $(r_i)_{i\geq 1}$ converges.

\begin{lem} If $r_i\rightarrow r$, then
\begin{align*} \mathcal{E}_\mathcal{R}^\Sigma (u,u)= r^{-1}\sum_{i=1}^3 \mathcal{E}_\mathcal{R}^\Sigma (u\circ G_i,u\circ G_i)
\end{align*}
\label{lem21}
\end{lem}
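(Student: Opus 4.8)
The plan is to reduce the claimed identity to a single-step relation between the level-$m$ and level-$(m-1)$ graph energies, obtained by splitting the word space $\mathcal{A}^m$ according to the first letter. Beyond this bookkeeping, the only analytic input is that $r_i\to r$ together with the existence of the limits defining $\mathcal{E}_\mathcal{R}^\Sigma$, so that the identity survives passage to the limit.

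First I would establish the recursion
\begin{align*}
Q_m^\Sigma(u,u)=\sum_{i=1}^3 Q_{m-1}^\Sigma(u\circ G_i,u\circ G_i).
\end{align*}
To see this, write every $w\in\mathcal{A}^m$ uniquely as $w=iv$ with $i\in\mathcal{A}$ and $v\in\mathcal{A}^{m-1}$. Since $G_w=G_i\circ G_v$, we have $u\circ G_w=(u\circ G_i)\circ G_v$, and hence $Q_0^\Sigma(u\circ G_w,u\circ G_w)=Q_0^\Sigma((u\circ G_i)\circ G_v,(u\circ G_i)\circ G_v)$. Summing over all $v\in\mathcal{A}^{m-1}$ for fixed $i$ reproduces exactly $Q_{m-1}^\Sigma(u\circ G_i,u\circ G_i)$, and summing over $i\in\{1,2,3\}$ yields the recursion. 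This step is purely combinatorial and carries no difficulty.

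Next I would normalize. Using $\delta_m=r_1\cdots r_m=r_m\,\delta_{m-1}$, divide the recursion by $\delta_m$ to obtain
\begin{align*}
\frac1{\delta_m}Q_m^\Sigma(u,u)=\frac1{r_m}\sum_{i=1}^3\frac1{\delta_{m-1}}Q_{m-1}^\Sigma(u\circ G_i,u\circ G_i).
\end{align*}
Letting $m\to\infty$, the left-hand side converges to $\mathcal{E}_\mathcal{R}^\Sigma(u,u)$ by definition, the factor $r_m^{-1}$ converges to $r^{-1}$ because $r_i\to r$, and each inner term converges to $\mathcal{E}_\mathcal{R}^\Sigma(u\circ G_i,u\circ G_i)$; as the outer sum is finite, the limit passes through it and produces the asserted identity.

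The one point that genuinely requires justification is that all three limits on the right-hand side exist, so that the passage to the limit is legitimate; I expect this to be the main (though mild) obstacle. The cleanest route is to invoke the monotonicity of $m\mapsto\frac1{\delta_m}Q_m^\Sigma(v,v)$ that is built into the construction of the compatible sequence in \cite{afk17}: for each fixed $v$ this sequence is non-decreasing, hence converges in $[0,\infty]$, so every $\mathcal{E}_\mathcal{R}^\Sigma(u\circ G_i,u\circ G_i)$ is well-defined and the identity holds as an equality in $[0,\infty]$. In particular, finiteness of $\mathcal{E}_\mathcal{R}^\Sigma(u,u)$ forces finiteness of each summand, so the statement is also meaningful for $u$ in the domain $\mathcal{F}_\mathcal{R}$.
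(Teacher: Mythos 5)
Your core computation is exactly the paper's proof: the word-splitting recursion $Q_m^\Sigma(u,u)=\sum_{i=1}^3 Q_{m-1}^\Sigma(u\circ G_i,u\circ G_i)$, the factorization $\delta_m=r_m\delta_{m-1}$, and passage to the limit using $r_m\to r$ and finiteness of the outer sum. That part is fine, and you are also right that the only genuine issue is the existence of the three limits $\lim_m \delta_{m-1}^{-1}Q_{m-1}^\Sigma(u\circ G_i,u\circ G_i)$ --- a point the paper itself passes over in silence.

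However, the justification you offer for that point rests on a false claim: the sequence $m\mapsto \delta_m^{-1}Q_m^\Sigma(v,v)$ is \emph{not} non-decreasing on the Hanoi attractor, and no such monotonicity is built into \cite{afk17}. Concretely, let $v$ be $1$ on $K_2$, $0$ on $K_1\cup K_3$, and affine along $e_1,e_2,e_3$; then $v\in\mathcal{F}_\mathcal{R}$, while $Q_0^\Sigma(v,v)=2$ and $Q_m^\Sigma(v,v)=0$ for every $m\ge 1$, so the renormalized sequence is $2,0,0,\ldots$ and drops. The structural reason is that, unlike for the Sierpi\'nski gasket, the $m$-cells of $K$ are pairwise disjoint: the network underlying $Q_m^\Sigma$ alone is a disjoint union of $3^m$ triangles, each corner of an $(m-1)$-cell lies in exactly one of them, and so the trace of that network onto $V_{m-1}$ is the zero form, not $\delta_{m-1}^{-1}Q_{m-1}^\Sigma$. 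What the matching-pair relation $\tfrac 53 r_k+\rho_k=1$ actually makes compatible --- and hence monotone in energy --- is the sequence of \emph{full} level-$m$ networks, i.e.\ the triangle edges of resistance $\delta_m$ together with all line edges of resistances $\gamma_k$, $k\le m$; the triangle part by itself is not a compatible sequence. The correct repair therefore goes through the full networks, which is how well-definedness is obtained in \cite{afk17}: write $\delta_m^{-1}Q_m^\Sigma(v,v)$ as the full discrete energy of $v$ minus the discrete line contribution $\sum_{k\le m}\gamma_k^{-1}\sum_{|w|=k-1,\,i}\bigl(v((e_i^w)^+)-v((e_i^w)_-)\bigr)^2$; the first term is non-decreasing in $m$ by compatibility, the second is non-decreasing and bounded above by the line energy of $v$ (Cauchy--Schwarz on each edge), which is finite for $v=u\circ G_i$ when $u\in\mathcal{F}_\mathcal{R}$ (under the paper's conditions the factors $\gamma_{k+1}/\gamma_k$ are bounded), so the difference converges in $[0,\infty]$. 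With that substitution your argument closes and coincides with the paper's; as written, the step you single out as the main obstacle is resolved by an appeal to a monotonicity that fails.
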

\begin{proof}
\begin{align*} \mathcal{E}_\mathcal{R}^\Sigma(u,u)&=\lim_{m\rightarrow \infty} \frac 1{\delta_m} Q_m^\Sigma(u,u)\\
&=\lim_{m\rightarrow \infty} \frac 1{\delta_m} \sum_{i=1}^3 Q_{m-1}^\Sigma(u\circ G_i,u\circ G_i)\\
&=\lim_{m\rightarrow \infty} \frac{\delta_{m-1}}{\delta_m}\frac 1{\delta_{m-1}} \sum_{i=1}^3 Q_{m-1}^\Sigma(u\circ G_i,u\circ G_i)\\
&=\lim_{m\rightarrow \infty} \frac 1{r_m}\frac 1{\delta_{m-1}} \sum_{i=1}^3 Q_{m-1}^\Sigma(u\circ G_i,u\circ G_i)\\
&=r^{-1} \sum_{i=1}^3 \mathcal{E}_\mathcal{R}^\Sigma(u\circ G_i, u \circ G_i)
\end{align*}
\end{proof}
\begin{cor} If $r_i\rightarrow r$, then
\begin{align*} \mathcal{E}_\mathcal{R}^\Sigma (u,u)= r^{-m}\sum_{w\in\{1,2,3\}^m} \mathcal{E}_\mathcal{R}^\Sigma (u\circ G_w,u\circ G_w)
\end{align*}
\end{cor}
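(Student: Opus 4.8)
The plan is to argue by induction on $m$, with Lemma \ref{lem21} serving as both the base case and the driving step. The case $m=1$ is precisely Lemma \ref{lem21}, so I would take the identity for a fixed $m$ as the inductive hypothesis and push it to $m+1$.

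The key step is to apply Lemma \ref{lem21} to each summand $\mathcal{E}_\mathcal{R}^\Sigma(u \circ G_w, u \circ G_w)$ individually. Here one must check that the lemma really is applicable with $u \circ G_w$ in the role of $u$: since every $G_w$ maps $K$ into itself, $u \circ G_w$ again lies in the relevant function class, and the hypothesis $r_i \to r$ is the only assumption the lemma needs, so no new condition is required. Applying it yields $\mathcal{E}_\mathcal{R}^\Sigma(u \circ G_w, u \circ G_w) = r^{-1} \sum_{i=1}^3 \mathcal{E}_\mathcal{R}^\Sigma(u \circ G_w \circ G_i, u \circ G_w \circ G_i)$.

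To finish, I would use the composition rule $G_w \circ G_i = G_{wi}$, where $wi \in \mathcal{A}^{m+1}$ is the word obtained by appending the letter $i$ to $w$. Because $(w,i) \mapsto wi$ is a bijection from $\mathcal{A}^m \times \{1,2,3\}$ onto $\mathcal{A}^{m+1}$, substituting the previous display into the inductive hypothesis merges the double sum into a single sum over all words of length $m+1$ and multiplies the prefactors to $r^{-m}\cdot r^{-1} = r^{-(m+1)}$, which is exactly the assertion for $m+1$.

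Since this is a routine induction built directly on Lemma \ref{lem21}, I do not expect a genuine obstacle; the only subtlety worth flagging is the verification that the lemma may be reapplied to the composed functions $u \circ G_w$, which rests on the self-maps $G_w$ preserving $K$ and on the same sequence $\mathcal{R}$ governing the renormalization at every scale.
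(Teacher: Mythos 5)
Your induction is correct and is exactly the argument the paper leaves implicit: the corollary is stated without proof, being the obvious iteration of Lemma \ref{lem21}, using $G_w \circ G_i = G_{wi}$ and the bijection $\mathcal{A}^m \times \mathcal{A} \to \mathcal{A}^{m+1}$, just as you write. Your check that Lemma \ref{lem21} reapplies to $u \circ G_w$ (since $G_w$ maps $K$ into itself and the hypothesis $r_i \to r$ is scale-independent) is the right point to flag, and the proof is complete.
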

Now for the line part of the resistance form. Here we don't have an equality, but we get upper and lower estimates.
The conditions on the sequences of matching pairs ensure, that
\begin{align*}
R^\star:=\prod_{i=1}^\infty r^{-1}r_i \in(0,\infty)
\end{align*}
Therefore $(a_m)_{m\geq 1}:=(\prod_{i=1}^mr^{-1}r_i)_{m\geq 1}$ converges in $(0,\infty)$ and is thus bounded from above and below. There exist $\kappa_1,\kappa_2\in(0,\infty)$ such that
\begin{align*}
\kappa_1\leq &\ a_m\leq \kappa_2 ,\ \forall m
\end{align*}
and thus:
\begin{lem} \begin{align*}
\kappa_1 r^m \leq &\ \delta_m \leq \kappa_2r^m, \ \forall m
\end{align*}\label{lem22}
\end{lem}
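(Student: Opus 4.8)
The plan is to read off the conclusion directly from the factorization of $\delta_m$ already recorded in the excerpt. First I would recall the defining relation $\delta_m = r_1\cdots r_m = \prod_{i=1}^m r_i$. Dividing each factor by $r$ and collecting the $m$ copies of $r^{-1}$ gives
\begin{align*}
a_m = \prod_{i=1}^m r^{-1} r_i = r^{-m}\prod_{i=1}^m r_i = r^{-m}\delta_m,
\end{align*}
so that $\delta_m = r^m a_m$ for every $m$. This is the only identity needed; it simply reinterprets the auxiliary sequence $(a_m)$ as the $r^m$-rescaling of $(\delta_m)$.

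The remaining step invokes the boundedness already established immediately before the statement. The conditions on $\mathcal{R}$ guarantee $R^\star = \prod_{i=1}^\infty r^{-1}r_i \in (0,\infty)$, hence $(a_m)$ converges to a strictly positive limit and is therefore trapped between two positive constants $\kappa_1 \leq a_m \leq \kappa_2$ for all $m$. Since $r^m > 0$, I would multiply this chain of inequalities by $r^m$ to obtain
\begin{align*}
\kappa_1 r^m \leq r^m a_m = \delta_m \leq \kappa_2 r^m, \quad \forall m,
\end{align*}
which is exactly the claim. There is no real obstacle here: the lemma is essentially a bookkeeping consequence of the definition of $\delta_m$ together with the convergence of $R^\star$, and the only point requiring care is matching the constants $\kappa_1,\kappa_2$ to the same ones already introduced as bounds for $(a_m)$.
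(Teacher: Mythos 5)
Your proposal is correct and follows exactly the paper's own route: the paper derives the lemma directly from the observation that $a_m = \prod_{i=1}^m r^{-1}r_i = r^{-m}\delta_m$ is bounded between $\kappa_1$ and $\kappa_2$ (a consequence of the convergence of $R^\star$ guaranteed by the conditions), and multiplying by $r^m$ gives the claim. The only difference is that you spell out the identity $\delta_m = r^m a_m$ explicitly, which the paper leaves implicit with its ``and thus.''
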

\noindent With this, we get estimates on the line part of the resistance form.
\begin{align*} \mathcal{E}_\mathcal{R}^I( u, u)&=\sum\limits_{k=1}^\infty \frac 1{\gamma_k} \mathcal{D}^I_k( u,u) \\
&\geq \sum\limits_{k=m+1}^\infty \frac 1{\gamma_k}\mathcal{D}^I_k(u,u)\\
&= \sum_{w\in\mathcal{A}^m}\sum\limits_{k=1}^\infty \frac 1{\tilde \gamma_k}\mathcal{D}^I_k(u\circ G_w,u\circ G_w)
\end{align*}
with
\begin{align*}\tilde\gamma_k=\gamma_{m+k}=r_1\cdots r_{m+(k-1)}\rho_{m+k}\end{align*}

For these new resistance factors for the edges we have two cases: \\
For $r<\frac 35$ the sequence $(\rho_k)_{k\geq 1}$ converges to $1-\frac 53r>0$ and is thus bounded from below by a constant $\kappa_3>0$.
\begin{align*} \tilde\gamma_k&=\frac{\delta_{m+k-1}}{\delta_{k-1}} \frac{\rho_{m+k}}{\rho_k}\gamma_k\\
&\leq \frac{\kappa_2}{\kappa_1}r^m \frac 1\kappa_3 \gamma_k\\
&=\tilde{\mathcal{K}}  r^m \gamma_k
\end{align*}
For $r=\frac 35$ the monotonicity gives the same estimate with $\tilde{\mathcal{K}} =1$.
\begin{align*}
\Rightarrow \mathcal{E}_\mathcal{R}^I(u,u)&\geq \frac 1{\tilde{\mathcal{K}}}  r^{-m} \sum_{w\in\mathcal{A}^m} \sum_{k=1}^\infty \frac 1{\gamma_k}\mathcal{D}^I_k(u\circ G_w,u\circ G_w)\\
&=\frac 1{\tilde{\mathcal{K}}}  r^{-m} \sum_{w\in\mathcal{A}^m}\mathcal{E}^I_\mathcal{R}(u\circ G_w,u\circ G_w)
\end{align*}
and therefore with $K:=\min\{1,1/\tilde{\mathcal{K}}\}$\\[0.2cm]
\begin{lem}\begin{align*}
\mathcal{E}_\mathcal{R}(u,u)\geq \mathcal{K}  r^{-m} \sum_{w\in\mathcal{A}^m}\mathcal{E}_\mathcal{R}(u\circ G_w,u\circ G_w)
\end{align*}\label{lem23}
\end{lem}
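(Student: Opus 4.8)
The plan is to combine the two estimates already established for the fractal part and the line part of the energy, exploiting that the total form splits additively as $\mathcal{E}_\mathcal{R}=\mathcal{E}^\Sigma_\mathcal{R}+\mathcal{E}^I_\mathcal{R}$. The entire purpose of setting $\mathcal{K}:=\min\{1,1/\tilde{\mathcal{K}}\}$ is to produce a single constant that simultaneously dominates the prefactors arising for both summands, so the proof reduces to lowering each prefactor to this common $\mathcal{K}$ and then recombining the two cell-sums.

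First I would treat the fractal part. The Corollary to Lemma~\ref{lem21} supplies the exact identity $\mathcal{E}^\Sigma_\mathcal{R}(u,u)=r^{-m}\sum_{w\in\mathcal{A}^m}\mathcal{E}^\Sigma_\mathcal{R}(u\circ G_w,u\circ G_w)$. Since $\mathcal{K}\leq 1$ and each term $\mathcal{E}^\Sigma_\mathcal{R}(u\circ G_w,u\circ G_w)\geq 0$ (being a limit of nonnegative quadratic forms), replacing the prefactor $1$ by $\mathcal{K}$ only weakens the right-hand side, which yields $\mathcal{E}^\Sigma_\mathcal{R}(u,u)\geq \mathcal{K}\,r^{-m}\sum_{w\in\mathcal{A}^m}\mathcal{E}^\Sigma_\mathcal{R}(u\circ G_w,u\circ G_w)$.

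Next I would invoke the line-part estimate derived immediately before the lemma, namely $\mathcal{E}^I_\mathcal{R}(u,u)\geq \tfrac{1}{\tilde{\mathcal{K}}}\,r^{-m}\sum_{w\in\mathcal{A}^m}\mathcal{E}^I_\mathcal{R}(u\circ G_w,u\circ G_w)$. Again every summand is nonnegative, and since $\mathcal{K}\leq 1/\tilde{\mathcal{K}}$ by construction, I may lower the prefactor from $1/\tilde{\mathcal{K}}$ down to $\mathcal{K}$ while preserving the inequality. Adding the fractal and line estimates and applying the additivity $\mathcal{E}^\Sigma_\mathcal{R}(v,v)+\mathcal{E}^I_\mathcal{R}(v,v)=\mathcal{E}_\mathcal{R}(v,v)$ to each $v=u\circ G_w$ collapses the two sums into $\mathcal{K}\,r^{-m}\sum_{w\in\mathcal{A}^m}\mathcal{E}_\mathcal{R}(u\circ G_w,u\circ G_w)$, which is exactly the asserted bound.

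This final assembly is entirely routine once the two sub-estimates are available, and I expect no genuine obstacle here. The substantive work sits in the line-part estimate preceding the lemma: the two-case analysis (the bound via the lower constant $\kappa_3>0$ for $\rho_k$ when $r<\tfrac35$, versus the monotonicity argument forcing $\tilde{\mathcal{K}}=1$ when $r=\tfrac35$) together with the reindexing $\tilde\gamma_k=\gamma_{m+k}$ that matches the edges inside cell $w$ with the global edge sum. The one point to keep in mind during the assembly is that the result is a one-sided estimate only, because the line part contributes an inequality rather than the clean scaling equality enjoyed by the fractal part.
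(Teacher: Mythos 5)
Your proposal is correct and follows essentially the same route as the paper: the lemma there is obtained exactly by combining the scaling identity for $\mathcal{E}^\Sigma_\mathcal{R}$ from the Corollary to Lemma~\ref{lem21} with the line-part estimate $\mathcal{E}^I_\mathcal{R}(u,u)\geq \tfrac{1}{\tilde{\mathcal{K}}}r^{-m}\sum_{w\in\mathcal{A}^m}\mathcal{E}^I_\mathcal{R}(u\circ G_w,u\circ G_w)$, with $\mathcal{K}=\min\{1,1/\tilde{\mathcal{K}}\}$ serving as the common prefactor. Your assembly of the two pieces, including the reduction of each prefactor to $\mathcal{K}$ using nonnegativity of the summands, matches the paper's argument.
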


\section{Hausdorff-Dimension in resistance metric} \label{chap3}
The topology of $K$ is the same with either the euclidean or the resistance metric $R_\mathcal{R}$ coming from one of the completely symmetric resistance forms described before \cite{afk17}. \\
That means the closure of $J=K\backslash \Sigma =\bigcup_{w\in\mathcal{A}^m, m\in \mathbb{N}_0, i\in\{1,2,3\}}e^i_w$ with respect to $R_\mathcal{R}$ is the same as with the euclidean metric.\\
Considering $K=\Sigma\cup J$, we have $\dim_{H,R_\mathcal{R}}K=\max\{\dim_{H,R_\mathcal{R}}\Sigma, \dim_{H,R_\mathcal{R}} J\}$ where $\dim_{H,R_\mathcal{R}}$ denotes the Hausdorff-Dimension calculated with respect to the the resistance metric $R_\mathcal{R}$.\\[0.2cm]
\begin{lem}$\dim_{H,R_\mathcal{R}} J\leq 1$\label{lem31}\end{lem}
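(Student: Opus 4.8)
The plan is to exploit that $J$ is a countable union of edges together with the countable stability of Hausdorff dimension, thereby reducing the claim to a single edge. Writing $J=\bigcup_{m\in\mathbb{N}_0,\,w\in\mathcal{A}^m,\,i\in\{1,2,3\}}e^i_w$, countable stability gives $\dim_{H,R_\mathcal{R}}J=\sup_{w,i}\dim_{H,R_\mathcal{R}}e^i_w$, so it suffices to show $\dim_{H,R_\mathcal{R}}e^w_i\leq 1$ for each fixed edge. Since every edge is the image of the unit interval under its arc-length parametrization $\xi_{e^w_i}$, I would establish this by proving that $\xi_{e^w_i}\colon\big((0,1),|\cdot|\big)\to(K,R_\mathcal{R})$ is Lipschitz; a Lipschitz map cannot raise Hausdorff dimension, so the edge would inherit the bound $\dim_H(0,1)=1$.

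First I would fix an edge $e^w_i$ sitting in the $k$-th graph approximation (so $w\in\mathcal{A}^{k-1}$) and estimate $R_\mathcal{R}$ between two of its points $\xi_{e^w_i}(s)$ and $\xi_{e^w_i}(t)$ from above. Using the variational form of the resistance metric, $R_\mathcal{R}(x,y)=\sup\{|u(x)-u(y)|^2/\mathcal{E}_\mathcal{R}(u,u):u\in\mathcal{F}_\mathcal{R},\,\mathcal{E}_\mathcal{R}(u,u)\neq 0\}$, the decisive observation is that every contribution to $\mathcal{E}_\mathcal{R}$ is nonnegative, so the full energy dominates the single one-dimensional integral coming from this edge:
\begin{align*}
\mathcal{E}_\mathcal{R}(u,u)&\geq\frac{1}{\gamma_k}\int_0^1\Big(\tfrac{d(u\circ\xi_{e^w_i})}{dx}\Big)^2dx\geq\frac{1}{\gamma_k}\int_s^t\Big(\tfrac{d(u\circ\xi_{e^w_i})}{dx}\Big)^2dx\\
&\geq\frac{|u(\xi_{e^w_i}(s))-u(\xi_{e^w_i}(t))|^2}{\gamma_k\,|t-s|},
\end{align*}
where the last inequality is Cauchy--Schwarz. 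Rearranging yields $R_\mathcal{R}(\xi_{e^w_i}(s),\xi_{e^w_i}(t))\leq \gamma_k\,|t-s|$, i.e.\ the parametrization is Lipschitz with constant $\gamma_k$.

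The hard part will be pinning down the precise normalization and the variational characterization of $R_\mathcal{R}$ for the full form $(\mathcal{E}_\mathcal{R},\mathcal{F}_\mathcal{R})$, and verifying that the estimate above is legitimate for \emph{every} admissible $u$; in particular one must justify discarding all other energy terms (both the $\mathcal{E}^\Sigma_\mathcal{R}$ part and the remaining edge integrals) by their nonnegativity, and then restricting from the whole edge to the subinterval $[s,t]$. Once the Lipschitz bound is secured, the conclusion is immediate: each edge satisfies $\dim_{H,R_\mathcal{R}}e^w_i\leq 1$, and countable stability gives $\dim_{H,R_\mathcal{R}}J\leq 1$. I would also note in passing that the reverse inequality holds, since $R_\mathcal{R}$ restricted to an edge is comparable to arc length, so in fact $\dim_{H,R_\mathcal{R}}J=1$; only the upper bound is needed for the present statement.
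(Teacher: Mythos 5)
Your proposal is correct and follows essentially the same route as the paper: reduce to a single edge by $\sigma$-stability of Hausdorff dimension, show that $R_\mathcal{R}$ restricted to an edge is Lipschitz-dominated by the (parameter/Euclidean) metric with constant proportional to $\gamma_{|w|+1}$, and conclude $\dim_{H,R_\mathcal{R}}(e^i_w)\leq 1$. The only difference is cosmetic: the paper asserts the bound $R_\mathcal{R}(x,y)\leq \gamma_{|w|+1}|x-y|\,d_e(e^i_w)^{-1}$ without derivation and then compares Hausdorff measures via coverings, whereas you derive that same bound from the variational characterization of $R_\mathcal{R}$ together with Cauchy--Schwarz and then invoke the standard fact that Lipschitz maps do not increase dimension.
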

\begin{proof}
To see this, we show that $\dim_{H,R_\mathcal{R}}(e^i_w)\leq 1 $ for each $i\in\{1,2,3\}$ and $w\in\mathcal{A}^m$. The result follows with the $\sigma$-stability of the Hausdorff-Dimension.\\

$d_e$ denotes the diameter with respect to the euclidean metric.
\begin{align*}
&x,y\in e^i_w \Rightarrow R_\mathcal{R}(x,y)\leq \gamma_{|w|+1}\cdot |x-y|\cdot  d_e(e^i_w)^{-1}\\
&\mathcal{H}^s_{\delta,R_\mathcal{R}}(e^i_w)=\inf\left\{\sum d_{R_\mathcal{R}}(U_i)^s | \{U_i\}_i \ \delta\text{-covering of }e^i_w\right\}, \\
&\text{with } d_{R_\mathcal{R}}(U_i)=\sup\limits_{x,y\in U_i} R_\mathcal{R}(x,y)\leq \frac{\gamma_{|w|+1}}{d_e(e_w^i)}\sup\limits_{x,y\in U_i}|x-y|=\frac{\gamma_{|w|+1}}{d_e(e_w^i)} d_{e}(U_i)\\
& \Rightarrow \mathcal{H}^s_{\delta,R_\mathcal{R}}(e^i_w)\leq \left(\frac{\gamma_{|w|+1}}{d_e(e_w^i)}\right)^s\cdot  \mathcal{H}^s_{\delta,e}(e^i_w)\\
&\Rightarrow \mathcal{H}^s_{R_\mathcal{R}}(e^i_w)\leq \left(\frac{\gamma_{|w|+1}}{d_e(e_w^i)}\right)^s\cdot \mathcal{H}^s_{e}(e^i_w)\\
&\Rightarrow \mathcal{H}^1_{R_\mathcal{R}}(e^i_w)\leq \frac{\gamma_{|w|+1}}{d_e(e_w^i)} \mathcal{H}^1_{e}(e^i_w)<\infty
\end{align*}
And therefore $\dim_{H,R_\mathcal{R}}(e^i_w)\leq 1$. \end{proof}
\begin{lem} Let $\mathcal{R}=(r_i,\rho_i)_{i\geq 1}$ be a sequence of matching pairs that fulfills the conditions, then there exists $c>0$, such that
\begin{align*}
d_{R_\mathcal{R}}(\Sigma_w)\leq c\cdot r^m, \ \text{ for all } w \in \{1,2,3\}^m , \ m\in\mathbb{N}
\end{align*}
\label{lem32}\end{lem}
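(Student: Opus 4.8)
The plan is to exploit the dual (variational) description of the resistance metric together with the scaling estimate of Lemma~\ref{lem23}, which lets me compare the diameter of the scaled copy $\Sigma_w$ to that of the whole fractal part $\Sigma$. Recall that the resistance metric associated to the regular resistance form $(\mathcal{E}_\mathcal{R},\mathcal{F}_\mathcal{R})$ satisfies
\begin{align*}
|u(x)-u(y)|^2\leq R_\mathcal{R}(x,y)\,\mathcal{E}_\mathcal{R}(u,u)\qquad\text{for all }u\in\mathcal{F}_\mathcal{R},
\end{align*}
and $R_\mathcal{R}(x,y)$ is in fact the smallest constant for which this holds. Since $\Sigma=G_1(\Sigma)\cup G_2(\Sigma)\cup G_3(\Sigma)$ we have $\Sigma_w=G_w(\Sigma)\subseteq\Sigma$, so every pair of points of $\Sigma_w$ can be written as $x=G_w(\tilde x)$, $y=G_w(\tilde y)$ with $\tilde x,\tilde y\in\Sigma$.

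First I would fix such a pair together with an arbitrary $u\in\mathcal{F}_\mathcal{R}$ satisfying $\mathcal{E}_\mathcal{R}(u,u)>0$, and observe that $u(x)-u(y)=(u\circ G_w)(\tilde x)-(u\circ G_w)(\tilde y)$. Applying the displayed inequality to the test function $u\circ G_w$ (which lies in $\mathcal{F}_\mathcal{R}$, its energy being finite by Lemma~\ref{lem23}) at the points $\tilde x,\tilde y$ gives
\begin{align*}
|u(x)-u(y)|^2\leq R_\mathcal{R}(\tilde x,\tilde y)\,\mathcal{E}_\mathcal{R}(u\circ G_w,u\circ G_w).
\end{align*}
Lemma~\ref{lem23}, keeping only the single nonnegative summand indexed by $w$ on its right-hand side, yields $\mathcal{E}_\mathcal{R}(u\circ G_w,u\circ G_w)\leq \mathcal{K}^{-1}r^m\,\mathcal{E}_\mathcal{R}(u,u)$. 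Substituting, dividing by $\mathcal{E}_\mathcal{R}(u,u)$, and taking the supremum over all admissible $u$ produces the pointwise contraction estimate $R_\mathcal{R}(x,y)\leq \mathcal{K}^{-1}r^m\,R_\mathcal{R}(\tilde x,\tilde y)$.

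Finally I would take the supremum over $\tilde x,\tilde y\in\Sigma$ to conclude
\begin{align*}
d_{R_\mathcal{R}}(\Sigma_w)\leq \mathcal{K}^{-1}r^m\,d_{R_\mathcal{R}}(\Sigma),
\end{align*}
so that the constant $c:=\mathcal{K}^{-1}d_{R_\mathcal{R}}(\Sigma)$ does the job, provided $d_{R_\mathcal{R}}(\Sigma)<\infty$.

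The finiteness of $d_{R_\mathcal{R}}(\Sigma)$ is the one point requiring care: it holds because $(K,R_\mathcal{R})$ carries the same (compact) topology as the euclidean $K$, so $R_\mathcal{R}$ is a bounded metric and the subset $\Sigma\subseteq K$ has finite diameter. The only other subtlety is confirming that $u\circ G_w\in\mathcal{F}_\mathcal{R}$, i.e. that continuity and the edgewise $H^1$ conditions are preserved under the affine similarity $G_w$; this is immediate from the definition of $\mathcal{F}_\mathcal{R}$ together with the energy bound furnished by Lemma~\ref{lem23}. I expect the heart of the argument — the contraction $R_\mathcal{R}(x,y)\leq \mathcal{K}^{-1}r^m R_\mathcal{R}(\tilde x,\tilde y)$ — to be short, with the genuine work confined to these domain-membership and finiteness bookkeeping points.
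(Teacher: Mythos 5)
Your proposal is correct and follows essentially the same route as the paper: both arguments hinge on applying Lemma~\ref{lem23}, truncated to the single summand indexed by $w$, to the composition $u\circ G_w$, yielding the contraction $R_\mathcal{R}(G_w(\tilde x),G_w(\tilde y))\leq \mathcal{K}^{-1}r^m R_\mathcal{R}(\tilde x,\tilde y)$, and then bounding by the (finite) diameter of $\Sigma$. The only difference is cosmetic: the paper works with the minimizer in the characterization $R_\mathcal{R}(a,b)^{-1}=\min\{\mathcal{E}_\mathcal{R}(u,u):u(a)=0,\ u(b)=1\}$, while you use the equivalent supremum formulation $|u(x)-u(y)|^2\leq R_\mathcal{R}(x,y)\,\mathcal{E}_\mathcal{R}(u,u)$, which avoids invoking existence of the minimizer.
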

\begin{proof}
Let $a,b\in \Sigma_w$, then there exist $x,y\in \Sigma$ with $G_w(x)=a$ and $G_w(y)=b$. 
\begin{align*} \min\{\mathcal{E}_\mathcal{R}(u,u): u\in\mathcal{F}_\mathcal{R}, u(a)=0, u(b)=1\}=R_\mathcal{R}(a,b)^{-1} \end{align*}
Let $\tilde u$ be the function for which the minimum is attained. Then $\tilde u\circ G_w \in \mathcal{F}_\mathcal{R}$ due to Lemma \ref{lem23} and $(\tilde u\circ G_w)(x)=0$ and $(\tilde u\circ G_w)(y)=1$, therefore it is one of the functions for which the energy is minimized to calculate $R_\mathcal{R}(x,y)$.\\
From Lemma \ref{lem23} we have
\begin{align*}
\mathcal{E}_\mathcal{R}(u,u)\geq \mathcal{K}  r^{-m} \sum_{\tilde w\in\mathcal{A}^m}\mathcal{E}_\mathcal{R}(u\circ G_{\tilde w},u\circ G_{\tilde w})
\end{align*}
and if we loose all $m$-cells but $K_w$ we have
\begin{align*}
\mathcal{E}_\mathcal{R}(u,u)\geq \mathcal{K}  r^{-m} \mathcal{E}_\mathcal{R}(u\circ G_{w},u\circ G_{ w})
\end{align*}
\begin{align*}
\Rightarrow R_\mathcal{R}(x,y)^{-1} \leq \mathcal{E}_\mathcal{R}(\tilde u \circ G_w, \tilde u \circ G_w)\leq \mathcal{K}^{-1}r^m\mathcal{E}_\mathcal{R}(\tilde u,\tilde u)=\mathcal{K}^{-1}r^m R_\mathcal{R}(a,b)^{-1}\end{align*}
\begin{align*}
\Rightarrow R_\mathcal{R}(a,b)\leq\mathcal{K}^{-1} r^m R_\mathcal{R}(x,y)\leq \mathcal{K}^{-1} r^m \sup\limits_{\tilde x,\tilde y\in \Sigma} R_\mathcal{R}(\tilde x,\tilde y) \leq cr^m
\end{align*}
This holds for all $a,b\in \Sigma_w$, therefore $d_{R_\mathcal{R}}(\Sigma_w)\leq c r^m$\end{proof}

\begin{lem} Let $\mathcal{R}=(r_i,\rho_i)_{i\geq 1}$ be a sequence of matching pairs that fulfills the conditions, then it holds for all $x\in\Sigma$, that there is a constant $c>0$, such that
\begin{align*} \#\left\{w \in \{1,2,3\}^m \ |\ R_\mathcal{R}(x,\Sigma_w) \leq c r^m \right\}\leq 4, \qquad \text{for all } m\in\mathbb{N} 
\end{align*}
\label{lem33}\end{lem}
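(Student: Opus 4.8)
The plan is to reduce the statement to a local fact about the single cell $\Sigma_{v|_m}$ that contains $x$ and its neighbours in the line-edge connection graph. Fix $x\in\Sigma$; since the maps $G_i$ have ratio $<\tfrac12$ the cells $\Sigma_w$ are pairwise disjoint, so $x$ has a unique address $v\in\mathcal A^{\mathbb N}$, and writing $v|_m$ for its length-$m$ prefix, $\Sigma_{v|_m}$ is the unique level-$m$ cell containing $x$. For $w\in\mathcal A^m$ let $\ell(w)$ be the length of the longest common prefix of $w$ and $v$. Because distinct cells are joined only through the line-edges, $\Sigma_{v|_m}$ is incident to at most three of them: the two edges $e^{v|_{m-1}}_i$ ($i\ne v_m$) of resistance $\gamma_m$ joining it to its two siblings inside $\Sigma_{v|_{m-1}}$, and at most one further edge at its outer corner $G_{v|_m}(p_{v_m})$, of resistance $\gamma_{m'}$ with $m'\le m-1$ (none when this corner is a global vertex $p_i$). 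Thus $\Sigma_{v|_m}$ has at most three neighbouring level-$m$ cells, and the lemma follows once I show that any level-$m$ cell which is neither $\Sigma_{v|_m}$ nor one of these $\le 3$ neighbours lies at resistance distance $>cr^m$ from $x$.

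For the required lower bounds I would use explicit separating potentials. If a finite family $E$ of line-edges separates $x$ from $\Sigma_w$, set $u\equiv 0$ on the component of $x$, $u\equiv 1$ beyond $E$, and let $u$ run linearly across each $e\in E$. Then $u\in\mathcal F_\mathcal R$ is locally constant on $\Sigma$, so $\mathcal E^\Sigma_\mathcal R(u,u)=0$ and $\mathcal E_\mathcal R(u,u)=\sum_{e\in E}\gamma_{k(e)}^{-1}$; by the variational description of the resistance used in Lemma \ref{lem32} this gives $R_\mathcal R(x,\Sigma_w)\ge\big(\sum_{e\in E}\gamma_{k(e)}^{-1}\big)^{-1}$. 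Taking for $E$ the (at most three) edges incident to the level-$(\ell+1)$ subcell $\Sigma_{v|_{\ell+1}}$ of $x$ inside the common ancestor $\Sigma_{v|_\ell}$, where $\ell=\ell(w)$, two of these have resistance $\gamma_{\ell+1}$ and the third is coarser, whence $R_\mathcal R(x,\Sigma_w)\ge\tfrac13\gamma_{\ell+1}$.

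With $\gamma_{\ell+1}=\delta_\ell\rho_{\ell+1}\ge\kappa_1 r^{\ell}\rho_{\ell+1}$ from Lemma \ref{lem22}, this settles the case $\tfrac13\le r<\tfrac35$ directly: there $\rho_{\ell+1}\ge\kappa_3>0$, so $R_\mathcal R(x,\Sigma_w)\ge\tfrac{\kappa_1\kappa_3}{3}r^{\ell}$, and if $c<\tfrac{\kappa_1\kappa_3}{3}r^{-2}$ then every counted cell must satisfy $\ell\ge m-1$. Cells with $\ell\ge m-1$ are precisely the (at most three) children of $\Sigma_{v|_{m-1}}$, so the count is $\le 3\le 4$; here the outer-corner neighbour is itself excluded, which is consistent with the sharp value being $3$ for $r<\tfrac35$.

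The hard part is $r=\tfrac35$. Now $\rho_i\to 0$ with $\sum_i\rho_i<\infty$, the line-edges become highly conductive, and the crude estimate $R_\mathcal R(x,\Sigma_w)\ge\tfrac13\gamma_{\ell+1}$ no longer distinguishes a coarse neighbour from the cells lying beyond it; indeed the single outer-corner neighbour is genuinely within $cr^m$, which is exactly why the sharp bound becomes $4$ rather than $3$. To rule out the remaining cells I would combine the edge estimate with the fractal diameter bound of Lemma \ref{lem32}: a path from $x$ to any cell that is not one of the $\le 3$ neighbours must, after leaving $\Sigma_{v|_m}$ through one incident edge, still cross an entire intermediate level-$m$ cell --- contributing resistance $\gtrsim r^m$ by the argument behind Lemma \ref{lem32} --- before reaching a second incident edge. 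Making this quantitative, by using the monotonicity of $(r_i)$ and $\sum_i\rho_i<\infty$ to control the cumulative conductance $\sum_{e\in E}\gamma_{k(e)}^{-1}$ of the relevant cut together with Lemma \ref{lem32} for the intervening fractal piece, is the delicate step, and I expect it to be the main obstacle: one must produce a single $m$-independent constant $c$ for which the only level-$m$ cells meeting $\{\,R_\mathcal R(x,\cdot)\le cr^m\,\}$ are $\Sigma_{v|_m}$ and its three neighbours, uniformly in the position of $x$, and in particular when $x$ sits at a junction, i.e.\ on the boundary of its own cell.
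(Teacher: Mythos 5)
Your structural analysis (unique address, at most three neighbouring level-$m$ cells, reduction to a resistance lower bound against all remaining cells) is sound, and your edge-cut argument does settle the case $\tfrac13\le r<\tfrac35$, in fact with the sharper count $3$. But there is a genuine gap: the paper's conditions also admit $r=\tfrac35$ (with $\sum_i\rho_i<\infty$, $(r_i)$ increasing), and for that case you have no proof --- as you yourself note. Moreover the gap cannot be closed along the route you sketch. Cutting at the line edges prices the unit potential drop at $\sum_{e\in E}\gamma_{k(e)}^{-1}$, and for $r=\tfrac35$ one has $\gamma_k=\delta_{k-1}\rho_k$ with $\rho_k\to0$, so these edges are asymptotically perfect conductors: your cut function is equivalent to shorting every $m$-cell to a point, which discards exactly the fractal resistance that dominates in this regime. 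Your proposed repair --- adding the resistance of an intermediate cell ``crossed along the path,'' citing Lemma \ref{lem32} --- also does not work as stated: Lemma \ref{lem32} is an \emph{upper} bound on the diameter $d_{R_\mathcal{R}}(\Sigma_w)$, not a lower bound on the resistance across a cell, and resistances do not add along paths when one wants lower bounds; a lower bound on $R_\mathcal{R}$ always requires exhibiting an admissible test function of small energy.

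The paper's proof supplies precisely that test function, uniformly for all $r\in[\tfrac13,\tfrac35]$. For the target cell $\Sigma_w$, let $u$ be the harmonic extension of the data on $V_m$ equal to $1$ on $G_w(V_0)$ \emph{and on the vertices of $V_m$ adjoining $G_w(V_0)$}, and $0$ on all other vertices of $V_m$. Since both endpoints of every connecting line edge then carry equal values, $u$ is constant across every line edge, so the line part contributes no energy at all, however small the $\rho_i$; the entire unit drop takes place \emph{inside} the at most three neighbouring $m$-cells, each contributing $2/\delta_m$ to the trace energy (corner values $(1,0,0)$ on a triangle of resistances $\delta_m$). Hence
\begin{align*}
\mathcal{E}_\mathcal{R}(u,u)\le \frac{6}{\delta_m}\le \frac{6}{\kappa_1}\,r^{-m}
\end{align*}
by Lemma \ref{lem22}, so $R_\mathcal{R}(x,\Sigma_w)\ge\frac{\kappa_1}{6}r^m$ whenever $x$ lies outside $K_w$ and its at most three neighbours. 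Since $x$ lies in exactly one $m$-cell, at most $4$ words $w$ escape this bound, which is the lemma with $c=\kappa_1/6$. This is the ``transition inside a cell'' your sketch gestures at, but implemented as one explicit harmonic test function rather than a path argument; had you replaced your cut function by this one from the start, the case distinction in $r$ would disappear.
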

\begin{proof}
\begin{align*} R_\mathcal{R}(x,y)^{-1} = \min\{ \mathcal{E}_\mathcal{R}(u,u) \ : \ u\in\mathcal{F}_\mathcal{R}, \ u(x)=0, u(y)=1\}\end{align*}
For a fixed $u\in \mathcal{F}_\mathcal{R}$ with $u(x)=0$ and $u(y)=1$ we have
\begin{align*} R_\mathcal{R}(x,y)\geq \frac 1{\mathcal{E}_\mathcal{R}(u,u)}
\end{align*}
We are looking for such a $u$, so that this estimate is good enough. Let $w \in \{1,2,3\}^m$, $y \in \Sigma_w$ and $x\in \Sigma\backslash \Sigma_w$.\\
Define $u_m$ on $V_m$ as 
\begin{align*} u_m(l)=1&, \ \text{for all } l \in G_w(V_0) \text{ and any } l\in V_m \text{ adjoining } G_w(V_0)\\
u_m(l)=0&, \text{otherwise}
\end{align*}
\begin{figure}[H]
\centering
\includegraphics[scale=0.1]{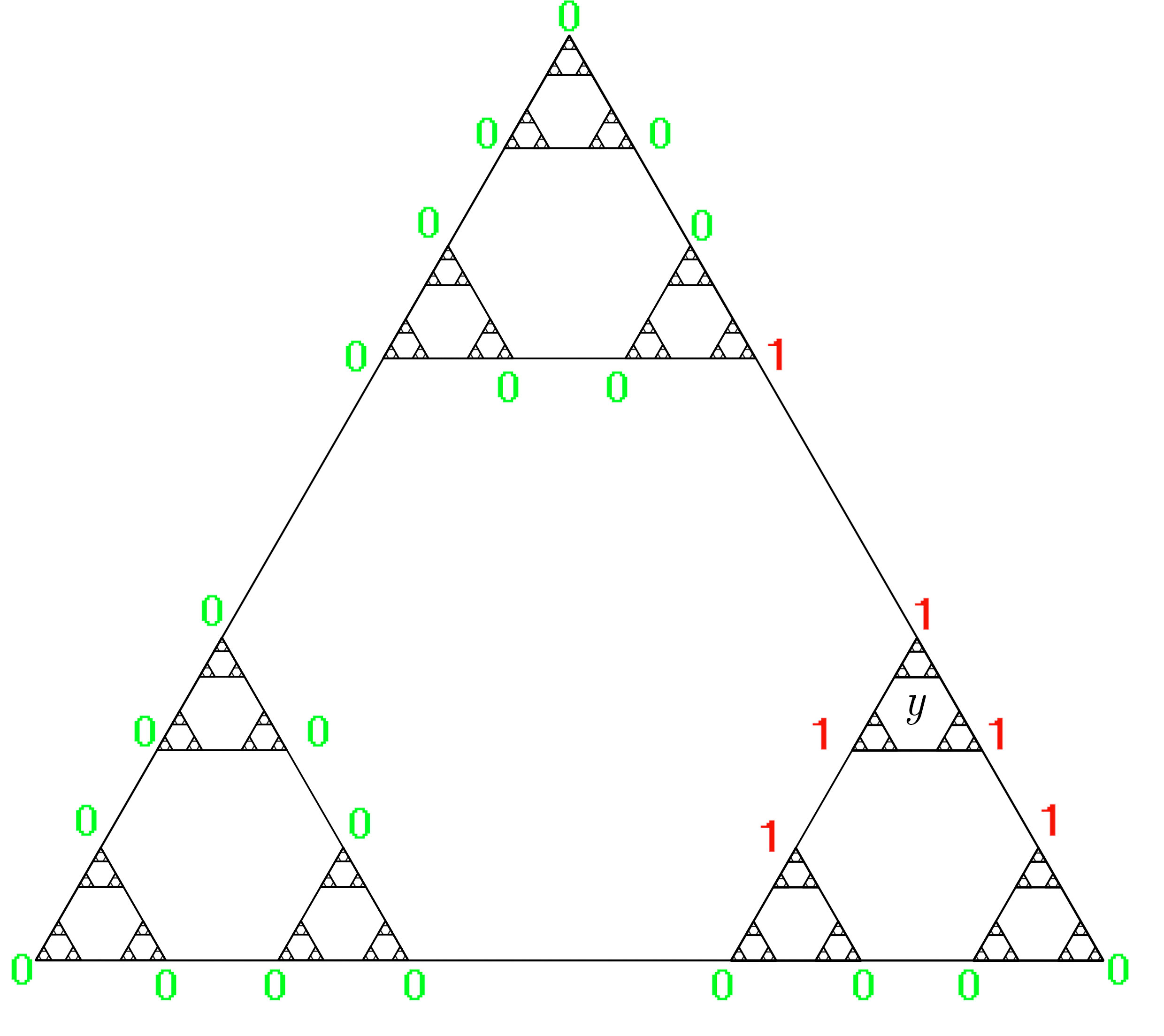}
\caption{Construction of $u_m$}
\label{hausdim}
\end{figure}
\noindent In figure \ref{hausdim} the construction of this function is illustrated where $y$ lies anywhere in the $2$-cell which is marked with "y".\\ 

Then define $u\in \mathcal{F}_\mathcal{R}$ as the harmonic extension of $u_m$. The extension $u$ is constant $1$ on $K_w$ (and therefore $\Sigma_w$) and constant $0$ in all but at most three $m$-cells differing from $K_w$. For $x$ in these $m$-cells where $u$ is constant $0$ we can use this function to get an estimate of $R_\mathcal{R}(x,y)$ for all $y\in \Sigma_w$. 
\begin{align*} \mathcal{E}_\mathcal{R}(u,u) \leq 3\cdot 2 \frac 1 { r_1\cdot r_m}=6\frac 1{\delta_m}
\end{align*}
From Lemma \ref{lem22} we get
\begin{align*} &\Rightarrow \mathcal{E}_\mathcal{R}(u,u) \leq  \frac 6{\kappa_1} r^{-m} \\
&\Rightarrow R_\mathcal{R}(x,y) \geq \frac{\kappa_1}6 r^m 
\end{align*}
For fixed $x$ there are at most four $m$-cells for which this construction does not work. We therefore have the desired result.\end{proof}
\begin{theo} Let $\mathcal{R}=(r_i,\rho_i)_{i\geq 1}$ be a sequence that fulfills the conditions, then
\begin{align*}
\dim_{H,R_\mathcal{R}}(K)=\frac{\ln 3}{-\ln r}
\end{align*}
\label{theo34}\end{theo}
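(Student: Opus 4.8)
$\textbf{Proof plan.}$ The goal is to show $\dim_{H,R_\mathcal{R}}(K)=\frac{\ln 3}{-\ln r}$. Since $K=\Sigma\cup J$ and Hausdorff dimension is stable under countable unions, I would write $\dim_{H,R_\mathcal{R}}K=\max\{\dim_{H,R_\mathcal{R}}\Sigma,\dim_{H,R_\mathcal{R}}J\}$. Lemma \ref{lem31} already gives $\dim_{H,R_\mathcal{R}}J\leq 1$, and since $\frac 13\leq r<\frac 35$ forces $\frac{\ln 3}{-\ln r}>1$, the line part cannot dominate. Hence the whole problem reduces to proving $\dim_{H,R_\mathcal{R}}\Sigma=\frac{\ln 3}{-\ln r}=:s$, and the theorem follows once both an upper and a lower bound for the dimension of the fractal part $\Sigma$ are established.

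For the $\textbf{upper bound}$ $\dim_{H,R_\mathcal{R}}\Sigma\leq s$, I would use the natural covering of $\Sigma$ by the $3^m$ cells $\{\Sigma_w:w\in\mathcal{A}^m\}$. By Lemma \ref{lem32} each has resistance-diameter $d_{R_\mathcal{R}}(\Sigma_w)\leq c\,r^m$, so these cells form a $\delta$-cover with $\delta=c\,r^m\to 0$. Computing the $s$-dimensional Hausdorff premeasure gives
\begin{align*}
\mathcal{H}^s_{\delta,R_\mathcal{R}}(\Sigma)\leq\sum_{w\in\mathcal{A}^m}d_{R_\mathcal{R}}(\Sigma_w)^s\leq 3^m(c\,r^m)^s=c^s\bigl(3\,r^s\bigr)^m,
\end{align*}
and the exponent $s=\frac{\ln 3}{-\ln r}$ is chosen precisely so that $3\,r^s=1$. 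Thus the bound is uniformly $c^s$ in $m$, which shows $\mathcal{H}^s_{R_\mathcal{R}}(\Sigma)<\infty$ and hence $\dim_{H,R_\mathcal{R}}\Sigma\leq s$.

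For the $\textbf{lower bound}$ $\dim_{H,R_\mathcal{R}}\Sigma\geq s$, I would apply the mass-distribution principle (Frostman's lemma). I would place on $\Sigma$ the natural self-similar Borel measure $\mu$ assigning mass $3^{-m}$ to each cell $\Sigma_w$, $w\in\mathcal{A}^m$. The task is then to verify a uniform upper regularity estimate $\mu(B_{R_\mathcal{R}}(x,\rho))\leq C\rho^s$ for balls in the resistance metric. The key ingredient here is Lemma \ref{lem33}: for a ball of radius comparable to $c\,r^m$ centered at $x\in\Sigma$, only a bounded number (at most $4$) of the level-$m$ cells $\Sigma_w$ can intersect it, because Lemma \ref{lem33} shows that at most four cells satisfy $R_\mathcal{R}(x,\Sigma_w)\leq c\,r^m$. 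Each such cell carries mass $3^{-m}=r^{sm}$, so $\mu(B_{R_\mathcal{R}}(x,c\,r^m))\leq 4\cdot r^{sm}$; translating an arbitrary radius $\rho$ to the appropriate scale $m$ (choosing $m$ with $c\,r^{m+1}<\rho\leq c\,r^m$) yields $\mu(B_{R_\mathcal{R}}(x,\rho))\leq C\rho^s$. The mass-distribution principle then gives $\mathcal{H}^s_{R_\mathcal{R}}(\Sigma)>0$, so $\dim_{H,R_\mathcal{R}}\Sigma\geq s$.

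The $\textbf{main obstacle}$ is the lower bound, and specifically the careful conversion between the discrete cell-scale estimates and genuine metric balls. Lemmas \ref{lem32} and \ref{lem33} already do the heavy analytic lifting, controlling diameters and the overlap of cells in the resistance metric, so the remaining difficulty is purely the geometric bookkeeping: ensuring that for every radius $\rho$ one can select the correct level $m$ uniformly in $x$, and that the measure $\mu$ is genuinely well-defined and Borel on $(\Sigma,R_\mathcal{R})$ (which relies on the fact established in the introduction to Chapter \ref{chap3} that the resistance and Euclidean topologies on $K$ coincide, so $\mu$ constructed via the cell structure is indeed a Borel measure for $R_\mathcal{R}$). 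Once these points are settled, both bounds close and the dimension equals $\frac{\ln 3}{-\ln r}$.
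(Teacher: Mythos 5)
Your proposal is correct, and its skeleton is the same as the paper's: decompose $K=\Sigma\cup J$, dispose of the line part with Lemma \ref{lem31}, and reduce everything to showing $\dim_{H,R_\mathcal{R}}\Sigma=\frac{\ln 3}{-\ln r}$, which rests on Lemmas \ref{lem32} and \ref{lem33}. The difference lies in how that last equality is established: the paper simply cites \cite[Theo.~2.4]{kig95}, a general result which takes precisely the two lemmas (decay of cell diameters, and a uniform bound on the number of cells near a point) as hypotheses and returns the dimension, whereas you re-prove this implication by hand --- the cover $\{\Sigma_w\}_{w\in\mathcal{A}^m}$ together with $3r^s=1$ for the upper bound, and the Bernoulli measure $\mu(\Sigma_w)=3^{-m}$ plus the mass-distribution principle for the lower bound. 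Your unpacking is exactly the content of the cited theorem, so nothing is lost; what it buys is a self-contained argument that makes visible where each lemma enters (Lemma \ref{lem32} only in the upper bound, Lemma \ref{lem33} only in the lower bound), at the cost of the bookkeeping you yourself flag: uniformity in $x$ of the constant $c$ in Lemma \ref{lem33} (which does hold, since its proof yields $c=\kappa_1/6$ independent of $x$), well-definedness of $\mu$ (unproblematic here because same-level cells $\Sigma_w$ are pairwise disjoint in the Hanoi attractor), and the radius-to-level conversion. One small correction: for $r=\frac 13$, which the conditions allow, one has $\frac{\ln 3}{-\ln r}=1$, not $>1$; the comparison $\dim_{H,R_\mathcal{R}}J\leq 1\leq\dim_{H,R_\mathcal{R}}\Sigma$ still forces $\dim_{H,R_\mathcal{R}}K=\dim_{H,R_\mathcal{R}}\Sigma$, exactly as in the paper, so the conclusion is unaffected.
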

\begin{proof}
From Lemma \ref{lem32} and \ref{lem33} it follows with \cite[Theo. 2.4]{kig95} that $$\dim_{H,R_\mathcal{R}}(\Sigma)=\frac{\ln 3}{-\ln r}$$ From Lemma \ref{lem31} we know, that $$\dim_{H,R_\mathcal{R}}(J)\leq 1\leq \dim_{H,R_\mathcal{R}}(\Sigma) \Rightarrow \dim_{H,R_\mathcal{R}}(K)=\dim_{H,R_\mathcal{R}}(\Sigma)$$ \end{proof}
\begin{rem} For $r=\frac 35$ this is the same value as for the Sierpi\'nski Gasket with the usual resistance form. \end{rem}
\section{Measures and operators}\label{chap4}
To get Dirichlet forms and Laplacians on $K$ we need a measure $\mu$ on $K$. This measure has to fulfill a few requirements. It has to be supported on $K$ and has to be locally finite (in particular finite due to the compactness of $K$). \\
We want to describe the measure on $K$ as the sum of a fractal- and a line-part in accordance to the geometric appearance of $K$.

It is clear how the fractal part of $\mu$ has to be choosen. If we distribute the mass equally on all m-cells of $K$ we get the normalized Hausdorff-measure on $K$:
\begin{align*}
\mu_f(K_w)=\mu_f(\Sigma_w)=\frac 1{3^{|\omega|}}
\end{align*} 
However this measure is too rough to measure the line parts. These parts of $K$ are just ignored by $\mu_f$. That means we need another measure $\mu_l$ which is able to measure $J$. How should the mass of the line segments scale to get an appropriate measure on $J$? For $a,\beta>0$:
\begin{align*}
\mu_l(e_i)&=a, \forall i\in\{1,2,3\}\\
\mu_l(e^w_i)&=a\beta^{m}, \forall i \in\{1,2,3\}, w\in\mathcal{A}^{m}
\end{align*}
For $\mu_l(J)<\infty$ it has to hold, that $\beta\in(0,\tfrac 13)$. 
\begin{align*}
\mu_l(J)&= 3a+3^2\beta a+3^3 \beta^2a+ \cdots\\
&=3a\sum_{k=0}^\infty (3\beta)^k\\
&=\frac{a}{\frac 13-\beta}\stackrel{!}{=}1\\[0.2cm]
\Rightarrow a&=\frac 13 -\beta
\end{align*}
If $\beta\rightarrow 0$ then more mass is distributed to the longer edges. For $\beta \rightarrow \frac 13 $ the mass is distributed more equally which displays the geometry better.\\[0.2cm]
We can now define the measure we will be using by the sum of those two parts:
\begin{align*}
\mu:=\frac 12(\mu_l+\mu_f)
\end{align*}
How does this measure scale for smaller cells? For the fractal part this is clear due to its definition:
\begin{align*}
\mu_f(K_w)=\frac 1{3^{|w|}}
\end{align*}
The measure $\mu_l$ on the line part exhibits another scaling. Since
\begin{align*}
1&=3^{|w|}\mu_l(K_w)+\sum_{\tilde w: |\tilde w|< |w|,i} \mu_l(e^i_{\tilde w})\\
&=3^{|w|}\mu_l(K_w)+3a\sum_{k=0}^{|w|-1}(3\beta)^k\\
&=3^{|w|}\mu_l(K_w)+(1-(3\beta)^{|w|})\\[0.2cm]
\Rightarrow \mu_l(K_w)&=\beta^{|w|}
\end{align*}
For $\mu$ we get the following estimates which will be usefull later on.
\begin{align*}
\beta^{|w|}\leq \mu(K_w)\leq \left(\frac 13\right)^{|w|}
\end{align*}
as well as for $B\in\mathcal{B}(\mathbb{R}^2)$
\begin{align*}
\beta^{|w|}\mu(B)\leq \mu(G_w(B))\leq \left(\frac 13\right)^{|w|}\mu(B)
\end{align*}

With these measures we can define Dirichlet forms and therefore operators on $L^2(K,\mu)$. Since $(K,R_\mathcal{R})$ is compact we have the following result with $\mathcal{D}_\mathcal{R}:=\overline{\mathcal{F}_\mathcal{R}\cap C_0(K)}^{\mathcal{E}_{_\mathcal{R},1}^{\frac 12}}=\mathcal{F}_\mathcal{R}$.\\
\begin{lem} $(\mathcal{E}_\mathcal{R},\mathcal{D}_\mathcal{R})$ is regular Dirichlet form on $L^2(K,\mu)$.\label{lem41}\end{lem}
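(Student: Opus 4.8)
The plan is to deduce the statement from the general correspondence between resistance forms and regular Dirichlet forms established by Kigami in \cite{kig12}, rather than verifying the Dirichlet-form axioms by hand. The two inputs are already available: by the main result of \cite{afk17} the pair $(\mathcal{E}_\mathcal{R},\mathcal{F}_\mathcal{R})$ is a regular resistance form on $K$ with associated resistance metric $R_\mathcal{R}$, and $(K,R_\mathcal{R})$ is compact. It therefore remains only to check that $\mu$ is an admissible reference measure and to match the resulting Dirichlet domain with $\mathcal{D}_\mathcal{R}$.

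First I would recall the abstract theorem: if $(\mathcal{E},\mathcal{F})$ is a regular resistance form on a set $X$ whose resistance metric makes $X$ locally compact and separable, and if $\nu$ is a Radon measure on $(X,R_\mathcal{R})$ that is locally finite and charges every nonempty open set, then $(\mathcal{E},\overline{\mathcal{F}\cap C_0(X)}^{\mathcal{E}_{\mathcal{R},1}^{1/2}})$ is a regular Dirichlet form on $L^2(X,\nu)$. Since $(K,R_\mathcal{R})$ is compact it is automatically complete, separable and locally compact, so the topological hypotheses hold. The Markov (unit-contraction) property and the closedness that underlie this theorem are intrinsic to resistance forms and require no extra work here.

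Next I would verify that $\mu=\tfrac12(\mu_f+\mu_l)$ meets the measure hypotheses. Finiteness is immediate from $\mu(K)=1$, and $\mu$ is a Borel measure on the compact metric space $(K,R_\mathcal{R})$, hence Radon. The only genuinely substantive point is full support: every nonempty relatively open subset of $K$ must have positive $\mu$-mass. This I would obtain from the cell estimate $\beta^{|w|}\le\mu(K_w)\le(1/3)^{|w|}$ together with the fact that the line part $\mu_l$ assigns the positive mass $a\beta^{m}$ to each edge $e^w_i$ with $w\in\mathcal{A}^m$; since the cells $K_w$ and the edges $e^w_i$ generate the topology of $K$ and each carries strictly positive mass, no nonempty open set can be $\mu$-null. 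I would also record that finite-energy functions lie in $L^2(K,\mu)$, which is clear because such functions are bounded and $R_\mathcal{R}$-continuous on the compact $(K,R_\mathcal{R})$ and $\mu$ is finite.

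Finally I would identify the domain. The resistance estimate $(u(x)-u(y))^2\le R_\mathcal{R}(x,y)\,\mathcal{E}_\mathcal{R}(u,u)$ shows that every $u\in\mathcal{F}_\mathcal{R}$ is $R_\mathcal{R}$-continuous, so $\mathcal{F}_\mathcal{R}\subset C(K)$; compactness gives $C_0(K)=C(K)$, whence $\mathcal{F}_\mathcal{R}\cap C_0(K)=\mathcal{F}_\mathcal{R}$. As a resistance form $(\mathcal{E}_\mathcal{R},\mathcal{F}_\mathcal{R})$ is already complete in the $\mathcal{E}_{\mathcal{R},1}^{1/2}$ norm, so taking the closure changes nothing and $\mathcal{D}_\mathcal{R}=\mathcal{F}_\mathcal{R}$, exactly as asserted before the lemma. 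Applying the abstract theorem with $\nu=\mu$ then yields that $(\mathcal{E}_\mathcal{R},\mathcal{D}_\mathcal{R})$ is a regular Dirichlet form on $L^2(K,\mu)$. The main obstacle is the full-support verification for $\mu$; once positivity on nonempty open sets is secured, the remainder is bookkeeping and citation.
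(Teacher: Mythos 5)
Your proposal is correct and follows essentially the same route as the paper, whose proof consists precisely of invoking \cite[Theo.~9.4]{kig12} together with the result of \cite{afk17} that $(\mathcal{E}_\mathcal{R},\mathcal{F}_\mathcal{R})$ is a regular resistance form on the compact space $(K,R_\mathcal{R})$. You simply spell out the hypotheses (Radon, full support of $\mu$, identification $\mathcal{D}_\mathcal{R}=\mathcal{F}_\mathcal{R}$) that the paper leaves implicit in the citation.
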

\begin{proof}
From \cite[Theo. 9.4]{kig12} and \cite[Theo. 5.16]{afk17} it follows that $(\mathcal{E}_\mathcal{R},\mathcal{D}_\mathcal{R})$ is a regular Dirichlet form.\end{proof}
Introducing Dirichlet boundary conditions we get another Dirichlet form with $\mathcal{D}_\mathcal{R}^0:=\{u\in\mathcal{D}_\mathcal{R}: \ u|_{V_0}\equiv 0\}$.\\
\begin{lem} $(\mathcal{E}_\mathcal{R}|_{\mathcal{D}_\mathcal{R}^0\times\mathcal{D}_\mathcal{R}^0}, \mathcal{D}_\mathcal{R}^0)$ is a regular Dirichlet form on $L^2(K\backslash V_0,\mu)$.\label{lem42}\end{lem}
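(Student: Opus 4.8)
The plan is to verify the four defining properties of a regular Dirichlet form for the pair $(\mathcal{E}_\mathcal{R}|_{\mathcal{D}_\mathcal{R}^0\times\mathcal{D}_\mathcal{R}^0},\mathcal{D}_\mathcal{R}^0)$: dense domain, closedness, the Markov property, and regularity. First I would record two reductions. Since $\mu=\frac12(\mu_l+\mu_f)$ is non-atomic (both $\mu_f$ and $\mu_l$ charge no single point), the finite set $V_0$ is a $\mu$-null set, so $L^2(K\backslash V_0,\mu)=L^2(K,\mu)$ as Hilbert spaces and the $L^2$-structure is unchanged. Moreover, as $V_0$ is finite, $K\backslash V_0$ is locally compact and $C_0(K\backslash V_0)=\{f\in C(K):f|_{V_0}\equiv 0\}$. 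Because every element of $\mathcal{D}_\mathcal{R}=\mathcal{F}_\mathcal{R}$ is continuous on $K$, we already have $\mathcal{D}_\mathcal{R}^0\subseteq C_0(K\backslash V_0)$, which will make the regularity statement economical.

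Next I would dispose of closedness and the Markov property, both inherited from Lemma \ref{lem41}. For closedness, the resistance estimate $|u(x)-u(y)|^2\le R_\mathcal{R}(x,y)\,\mathcal{E}_\mathcal{R}(u,u)$ together with the compactness of $(K,R_\mathcal{R})$ shows that each point evaluation $u\mapsto u(p_i)$ is bounded with respect to the norm $\mathcal{E}_{\mathcal{R},1}(u,u):=\mathcal{E}_\mathcal{R}(u,u)+\|u\|_{L^2}^2$. Hence $\mathcal{D}_\mathcal{R}^0=\{u\in\mathcal{D}_\mathcal{R}:u|_{V_0}\equiv 0\}$ is the kernel of a bounded linear map, i.e. an $\mathcal{E}_{\mathcal{R},1}$-closed subspace, and a closed form restricted to a closed subspace is again closed. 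For the Markov property I note that if $u\in\mathcal{D}_\mathcal{R}^0$ then the unit contraction $\bar u=(0\vee u)\wedge 1$ satisfies $\bar u|_{V_0}\equiv 0$ (since $u(p_i)=0$ forces $\bar u(p_i)=0$) and lies in $\mathcal{D}_\mathcal{R}$ with $\mathcal{E}_\mathcal{R}(\bar u,\bar u)\le\mathcal{E}_\mathcal{R}(u,u)$ by Lemma \ref{lem41}; thus $\bar u\in\mathcal{D}_\mathcal{R}^0$ and the contraction property survives the restriction.

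The substance of the lemma is regularity, and this is the step I expect to be the main obstacle. Since $\mathcal{D}_\mathcal{R}^0\subseteq C_0(K\backslash V_0)$, the $\mathcal{E}_{\mathcal{R},1}$-density of $\mathcal{D}_\mathcal{R}^0\cap C_0(K\backslash V_0)=\mathcal{D}_\mathcal{R}^0$ in $\mathcal{D}_\mathcal{R}^0$ is automatic, so only uniform density of $\mathcal{D}_\mathcal{R}^0$ in $C_0(K\backslash V_0)$ remains. Given $f\in C_0(K\backslash V_0)$, I would extend it by $0$ on $V_0$ to some $\tilde f\in C(K)$ and use the regularity of the unrestricted form to pick $u_n\in\mathcal{D}_\mathcal{R}$ with $u_n\to\tilde f$ uniformly; in particular $u_n(p_i)\to\tilde f(p_i)=0$. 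The obstruction is that $u_n$ need not vanish on $V_0$, so I would subtract a correction: let $h_n\in\mathcal{D}_\mathcal{R}$ be the $V_0$-harmonic function (the $\mathcal{E}_\mathcal{R}$-energy minimizer among functions with prescribed boundary values $u_n|_{V_0}$), which exists by the resistance-form theory and, by the maximum principle coming from the Markov property, satisfies $\|h_n\|_\infty=\max_{p\in V_0}|u_n(p)|\to 0$. Then $u_n-h_n\in\mathcal{D}_\mathcal{R}^0$ and $u_n-h_n\to\tilde f$ uniformly on $K$, hence to $f$ on $K\backslash V_0$, giving the required uniform density; and since $C_0(K\backslash V_0)$ is dense in $L^2(K,\mu)$, this simultaneously yields the density of the domain in $L^2$. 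The genuinely delicate point is thus the construction and the $L^\infty$-control of the harmonic correction $h_n$; alternatively one could bypass the explicit correction by invoking the general theorem that the part of a regular Dirichlet form on the open set $K\backslash V_0$ is again regular, after checking that $\mathcal{D}_\mathcal{R}^0$ coincides with that part.
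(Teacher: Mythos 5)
Your proposal is correct, but it follows a genuinely different route from the paper's. The paper disposes of the Dirichlet-form property in one stroke by citing \cite[Prop.~2.19]{kig03}: since $V_0$ is finite, the restriction of a resistance form to functions vanishing on $V_0$ is again a resistance form, hence (with the chosen measure, as in Lemma~\ref{lem41}) a Dirichlet form; regularity is then merely \emph{asserted}, namely that $\mathcal{D}_\mathcal{R}^0$ is $\|\cdot\|_\infty$-dense in $C_0(K\backslash V_0)$, with no argument supplied. You instead verify the axioms by hand: closedness because $\mathcal{D}_\mathcal{R}^0$ is the kernel of point evaluations that are bounded in the $\mathcal{E}_{\mathcal{R},1}$-norm (resistance estimate plus the $L^2$-term), Markovianity because the unit contraction preserves vanishing on $V_0$, and---the real content---uniform density via approximation in $\mathcal{D}_\mathcal{R}$ followed by subtraction of the $V_0$-harmonic extension $h_n$, whose sup-norm is controlled by the maximum principle. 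What the paper's route buys is brevity and the stronger structural statement that the restricted form is itself a \emph{resistance} form, which it implicitly reuses later (the proof of Lemma~\ref{lem62} refers back to this lemma); what your route buys is that the density claim the paper leaves unproven actually gets a proof---your harmonic-correction argument is precisely the missing justification. Two caveats: your ingredients (existence and uniqueness of $V_0$-harmonic extensions, the maximum principle) are themselves facts of resistance-form theory, so your argument is more explicit rather than more elementary; and your closing alternative via the part of the Dirichlet form on the open set $K\backslash V_0$ would require the extra check that points of $V_0$ have positive capacity, so that the part domain (functions vanishing quasi-everywhere on $V_0$) coincides with $\mathcal{D}_\mathcal{R}^0$.
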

\begin{proof}
Since $V_0$ is finite it follows with \cite[Prop. 2.19]{kig03} that is a resistance form and therefore a Dirichlet form. It is regular since $\mathcal{D}_{\mathcal{R}}^0$ is dense in $C_0(K\backslash V_0)$ with respect to $||.||_\infty$.\end{proof}
We denote the associated self adjoint operators with dense domains by $-\Delta_N^{\mu,\mathcal{R}}$ resp. $-\Delta_D^{\mu,\mathcal{R}}$. \\
\begin{lem} $-\Delta_N^{\mu,\mathcal{R}}$ and $-\Delta_D^{\mu,\mathcal{R}}$ have discrete non negative spectrum.\label{lem43}\end{lem}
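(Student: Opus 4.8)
The plan is to show that each operator is non-negative and has compact resolvent; discreteness of the spectrum then follows from the spectral theorem for self-adjoint operators with compact resolvent. Non-negativity is immediate: $-\Delta_N^{\mu,\mathcal{R}}$ is by construction the self-adjoint operator associated with the symmetric closed form $(\mathcal{E}_\mathcal{R},\mathcal{D}_\mathcal{R})$ on $L^2(K,\mu)$, and since $\mathcal{E}_\mathcal{R}(u,u)=\mathcal{E}^\Sigma_\mathcal{R}(u,u)+\mathcal{E}^I_\mathcal{R}(u,u)\ge 0$ for all $u\in\mathcal{D}_\mathcal{R}$, the associated operator is non-negative. The same reasoning applies to $-\Delta_D^{\mu,\mathcal{R}}$ on the closed subspace $\mathcal{D}_\mathcal{R}^0$.

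The real content is discreteness, which I would deduce from compactness of the embedding $(\mathcal{D}_\mathcal{R},\mathcal{E}_{\mathcal{R},1})\hookrightarrow L^2(K,\mu)$, where $\mathcal{E}_{\mathcal{R},1}(u,u):=\mathcal{E}_\mathcal{R}(u,u)+\|u\|_{L^2(K,\mu)}^2$. The main tool is the defining inequality of a resistance form: for all $u\in\mathcal{D}_\mathcal{R}$ and $x,y\in K$,
\[
|u(x)-u(y)|^2\le R_\mathcal{R}(x,y)\,\mathcal{E}_\mathcal{R}(u,u).
\]
As noted at the start of Chapter \ref{chap3}, the metric $R_\mathcal{R}$ induces the same topology as the euclidean one, so $(K,R_\mathcal{R})$ is compact, and in particular has finite diameter. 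Consequently, any family $\{u_n\}\subset\mathcal{D}_\mathcal{R}$ bounded in the $\mathcal{E}_{\mathcal{R},1}$-norm is, by the displayed inequality, equicontinuous with respect to $R_\mathcal{R}$, and it is uniformly bounded: comparing each $u_n$ to its $\mu$-mean and using the same inequality together with finiteness of $\mu$ controls $\|u_n\|_\infty$. The Arzelà--Ascoli theorem then yields a subsequence converging in $C(K)$; since $\mu$ is finite, the inclusion $C(K)\hookrightarrow L^2(K,\mu)$ is continuous, so this subsequence converges in $L^2(K,\mu)$ as well. Hence the embedding is compact.

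From compactness of the embedding it follows that the resolvent of $-\Delta_N^{\mu,\mathcal{R}}$ is compact, so its spectrum is discrete; combined with the non-negativity above, the spectrum is a discrete subset of $[0,\infty)$. For $-\Delta_D^{\mu,\mathcal{R}}$ one argues identically, replacing $\mathcal{D}_\mathcal{R}$ by its closed subspace $\mathcal{D}_\mathcal{R}^0$, whose embedding into $L^2(K\backslash V_0,\mu)$ inherits compactness from that of $\mathcal{D}_\mathcal{R}$. Alternatively, the statement can be packaged as a citation to the general theory of resistance forms, e.g.\ \cite{kig12}, which guarantees a compact resolvent whenever the resistance space is compact and $\mu$ is a finite Borel measure.

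I expect the only subtle step to be the passage from an $\mathcal{E}_{\mathcal{R},1}$-bound to a uniform sup-norm bound. The resistance inequality controls oscillations $|u(x)-u(y)|$ but says nothing about the absolute size of $u$; pinning this down requires the $L^2(K,\mu)$-component of the norm together with the finiteness of $\mu$ (here $\mu(K)=1$) to bound the mean of $u$. Once uniform boundedness and equicontinuity are established, the compactness of $(K,R_\mathcal{R})$ makes the remainder of the argument routine.
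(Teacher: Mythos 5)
Your proposal is correct and follows essentially the same route as the paper: both arguments reduce the claim to compactness of the embedding of the form domain into $L^2(K,\mu)$ via uniform convergence on the compact space $(K,R_\mathcal{R})$, and then invoke the standard equivalence between compact embedding/resolvent and discrete non-negative spectrum, handling the Dirichlet operator through the inclusion $\mathcal{D}_\mathcal{R}^0\subset\mathcal{D}_\mathcal{R}$. The only difference is one of packaging: where the paper cites Kigami's Lemma 9.7 for compactness of $\mathcal{D}_\mathcal{R}\hookrightarrow C(K)$ and Birman--Solomjak for the spectral conclusions, you prove the compactness directly from the resistance estimate via Arzel\`a--Ascoli (correctly isolating the sup-norm bound as the delicate point), which is precisely the argument underlying the cited lemma.
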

\begin{proof}
Since $(K,R_\mathcal{R})$ is compact it follows with \cite[Lemma 9.7]{kig12} that the inclusion map $\iota: \mathcal{D}_\mathcal{R}\hookrightarrow C(K)$ with the norms $\mathcal{E}_\mathcal{R}^{\frac 12}$ resp. $||\cdot ||_{\infty}$ is a compact operator. Since the inclusion map from $C(K)$ to $L^2(K,\mu)$ is continuous the inclusion from $\mathcal{D}_\mathcal{R}$ to $L^2(K,\mu)$ is a compact operator and therefore with \cite[Theo. 5 Chap. 10]{bs87} the spectrum of $-\Delta_N^{\mu,\mathcal{R}}$ is discrete and non-negative. Since $\mathcal{D}_\mathcal{R}^0\subset \mathcal{D}_\mathcal{R}$ the same follows for $-\Delta_D^{\mu,\mathcal{R}}$ by \cite[Theo. 4 Chap. 10]{bs87}. \end{proof}
\section{Results}\label{chap5}
Due to Lemma \ref{lem43} we can write the eigenvalues in nondecreasing order and study the eigenvalue counting functions. Denote by $\lambda_k^{N,\mu,\mathcal{R}}$ the $k$-th eigenvalue of $-\Delta_N^{\mu,\mathcal{R}}$ resp. $\lambda_k^{D,\mu,\mathcal{R}}$ for $-\Delta_D^{\mu,\mathcal{R}}$ with $k\geq 1$. Now define
\begin{align*}
N_N^{\mu,\mathcal{R}}(x):=\#\{k\geq 1: \lambda_k^{N,\mu,\mathcal{R}}\leq x\}\\
N_D^{\mu,\mathcal{R}}(x):=\#\{k\geq 1: \lambda_k^{D,\mu,\mathcal{R}}\leq x\}
\end{align*}
Since $\mathcal{D}_\mathcal{R}^0\subset \mathcal{D}_\mathcal{R}$ we immediately get 
\begin{align*}
N_D^{\mu,\mathcal{R}}(x)\leq N_N^{\mu,\mathcal{R}}(x), \ \forall x\geq 0
\end{align*}

We want to study the asymptotic behaviour of the eigenvalue counting functions. The next theorem is the main result of this work.\\
\begin{theo}
Let $\mathcal{R}:=(r_i,\rho_i)_{i\geq 1}$ be a sequence of matching pairs that fulfills the conditions. Then there exist constants $0<C_1,C_2<\infty$ and $x_0>0$, such that for all $x\geq x_0$:
\begin{align*}
C_1x^{\frac 12 d^{\mathcal{R}}_S(K)} \leq N_D^{\mu,\mathcal{R}}(x)\leq N_N^{\mu,\mathcal{R}}(x)\leq C_2x^{\frac 12 d^{\mathcal{R}}_S(K)}
\end{align*}
with 
\begin{align*}
d^{\mathcal{R}}_S(K)=\frac{\ln 9}{\ln3 - \ln r}
\end{align*}
\label{theo51}
\end{theo}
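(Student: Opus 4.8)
The plan is to use Dirichlet--Neumann bracketing along the cell decomposition of $K$, reducing the global eigenvalue count to a sum of eigenvalue counts on the level-$m$ cells $K_w$ together with the decoupled one-dimensional edges, and then to exploit the (approximate) scaling of the resistance form from Lemmas \ref{lem21}--\ref{lem23}. Concretely, for each $m$ I would impose Dirichlet, respectively Neumann, conditions at every vertex of $V_m$. This decouples the form into the intrinsic cell forms $(\mathcal{E}^{K_w},L^2(K_w,\mu))$, $w\in\mathcal{A}^m$, and the finitely many ``external'' edges $e_i^{\tilde w}$ with $|\tilde w|\le m-1$, each of which becomes an isolated one-dimensional interval. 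Writing $N^{(w)}_{\bullet}$ and $N^{\mathrm{edge}}_{\bullet}$ for the corresponding counting functions, the variational (min--max) principle gives
\begin{align*}
\sum_{w\in\mathcal{A}^m}N_D^{(w)}(x)\ \le\ N_D^{\mu,\mathcal{R}}(x)\ \le\ N_N^{\mu,\mathcal{R}}(x)\ \le\ \sum_{w\in\mathcal{A}^m}N_N^{(w)}(x)+\sum_{\tilde w,i}N_N^{\mathrm{edge}}(x).
\end{align*}

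The core step is to determine how the cell counting functions scale. For $\phi$ on $K_w$ and $v:=\phi\circ G_w$ on $K$, the corollary to Lemma \ref{lem21} gives exact scaling of the fractal energy by $r^{-m}$, while Lemma \ref{lem23} (applied to the single cell $K_w$, as in the proof of Lemma \ref{lem32}) together with its upper counterpart --- which holds with uniform constants precisely because the conditions force $a_m=\prod_{i=1}^m r^{-1}r_i$ to stay in a fixed interval $[\kappa_1,\kappa_2]$ (Lemma \ref{lem22}) and the $\rho$-ratios to be bounded --- yields $\mathcal{E}^{K_w}(\phi,\phi)\asymp r^{-m}\,\mathcal{E}_{\mathcal{R}}(v,v)$ uniformly in $w$ and $m$. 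Combined with the measure estimate $\mu(G_w(B))\le 3^{-m}\mu(B)$ this bounds the cell Neumann eigenvalues from below by $c\,(3/r)^m\lambda_k^{\mathcal{R}}$, so that $N_N^{(w)}(x)\le N_N^{\mathrm{ref}}\big(C(r/3)^m x\big)$ for a fixed reference problem. Choosing $m=m(x)$ with $(r/3)^m x\le C_0$ makes each term $O(1)$, whence $\sum_{w}N_N^{(w)}(x)\le C\,3^m$; since this choice gives $3^m\asymp x^{\frac12 d^{\mathcal{R}}_S(K)}$ (recall $d^{\mathcal{R}}_S(K)/2=\ln 3/\ln(3/r)$), this is the desired order. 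The edge sum is estimated by the one-dimensional Weyl count: the interval coming from a level-$k$ edge carries energy weight $\gamma_{k+1}^{-1}\asymp r^{-k}$ and mass $\asymp\beta^{k}$, so $N_N^{\mathrm{edge}}(x)\lesssim 1+\sqrt{x}\,(r\beta)^{k/2}$; summing the $\asymp 3^{k}$ edges over $k<m$ gives a bound $\lesssim 3^m+\sqrt{x}\,(9r\beta)^{m/2}\lesssim 3^m$, using $\beta<\tfrac13$. Hence $N_N^{\mu,\mathcal{R}}(x)\le C_2\,x^{\frac12 d^{\mathcal{R}}_S(K)}$.

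For the matching lower bound I would keep only the fractal mass. Since $\mu\ge\frac12\mu_f$ and $\mu_f(G_w(B))=3^{-m}\mu_f(B)$ \emph{exactly} for $B\subset\Sigma$, I fix one function $\tilde\phi\in\mathcal{F}_{\mathcal{R}}$ vanishing on $V_0$ with $\int_\Sigma\tilde\phi^2\,d\mu_f>0$ and use $\phi_w:=\tilde\phi\circ G_w^{-1}$ as a Dirichlet test function on $K_w$ (it vanishes on $G_w(V_0)=\partial K_w$). The energy scaling gives $\mathcal{E}^{K_w}(\phi_w,\phi_w)\lesssim r^{-m}$, whereas $\|\phi_w\|_{L^2(K_w,\mu)}^2\ge\frac12\int_{\Sigma_w}\phi_w^2\,d\mu_f=\frac12\,3^{-m}\int_\Sigma\tilde\phi^2\,d\mu_f\gtrsim 3^{-m}$, so the Rayleigh quotient is $\lesssim (3/r)^m$ and thus $\lambda_1^{D,(w)}\le C'(3/r)^m$ uniformly in $w$. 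Choosing $m=m(x)$ with $C'(3/r)^m\le x$ forces every Dirichlet cell to contribute at least one eigenvalue $\le x$, so $N_D^{\mu,\mathcal{R}}(x)\ge 3^m\ge C_1\,x^{\frac12 d^{\mathcal{R}}_S(K)}$. The two choices of $m$ differ by $O(1)$ and only affect the constants $C_1,C_2,x_0$.

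The main obstacle is the interplay of the two different measure scalings: $\mu_f$ scales by $3^{-m}$ but $\mu_l$ only by $\beta^{m}\ll 3^{-m}$, so the crude two-sided bound $\beta^{m}\le\mu(K_w)\le 3^{-m}$ would produce the wrong (too large) eigenvalue scale $(r\beta)^{-m}$ and hence a spurious, $\beta$-dependent exponent. The resolution --- and the point I expect to require the most care --- is to never use the lower measure bound on the whole cell: for the lower spectral bound one tests only against functions supported on the fractal part $\Sigma_w$, where the mass scales exactly as $3^{-m}$, while for the upper bound one checks that the genuinely one-dimensional edge modes contribute only at order $x^{1/2}\le x^{\frac12 d^{\mathcal{R}}_S(K)}$ (with equality exactly at $r=\tfrac13$). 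Equally delicate is making every ``$\asymp$'' above uniform in the cell word $w$ and the level $m$; this is exactly what the conditions $\sum_i|r-r_i|<\infty$ (resp. $\sum_i\rho_i<\infty$ with $(r_i)$ monotone for $r=\tfrac35$) buy us, through the boundedness of $a_m$ in Lemma \ref{lem22} and the resulting uniform comparability of the shifted forms $\mathcal{E}_{\mathcal{R}_w}$ to $\mathcal{E}_{\mathcal{R}}$.
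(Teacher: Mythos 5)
Your overall route is the same as the paper's: Dirichlet--Neumann bracketing at $V_m$, decoupling into the $3^m$ cells plus the one-dimensional edges, one-dimensional Weyl estimates for the edges, and the fractal cells producing the leading order $x^{\ln 3/\ln(3/r)}$. Your upper estimate is a sound variant of the paper's: where the paper bounds the first non-zero Neumann eigenvalue $\lambda^m_{3^m+1}$ from below via a uniform Poincar\'e inequality, you compare each Neumann cell problem with a fixed reference problem through the pull-back $\phi\mapsto\phi\circ G_w$. Writing $\mathcal{E}^{K_w}$ for the part of $\mathcal{E}_{K_m}$ living on the single cell $K_w$ and $v=\phi\circ G_w$, one has exactly
\begin{align*}
\mathcal{E}^{K_w}(\phi,\phi)=r^{-m}\mathcal{E}^\Sigma_{\mathcal{R}}(v,v)+\sum_{k\geq 1}\frac{1}{\gamma_{m+k}}\mathcal{D}^I_k(v,v)\ \geq\ \mathcal{K}\,r^{-m}\,\mathcal{E}_{\mathcal{R}}(v,v),
\end{align*}
where the inequality is $\gamma_{m+k}\leq\tilde{\mathcal{K}}r^m\gamma_k$, i.e.\ precisely the estimate behind Lemma \ref{lem23}, valid in both cases of the conditions; combined with $\mu(G_w(B))\leq 3^{-m}\mu(B)$ and discreteness of the reference spectrum (Lemma \ref{lem43}) this gives your bound $\sum_w N_N^{(w)}(x)\leq C3^m$, and your edge estimates and case distinction in $\beta$ agree with the paper's. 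So the Neumann half stands, because it only ever uses this \emph{one-sided} energy comparison.

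The genuine gap is in the lower estimate, and it sits exactly at $r=\tfrac35$. Your two-sided claim $\mathcal{E}^{K_w}(\phi,\phi)\asymp r^{-m}\mathcal{E}_{\mathcal{R}}(v,v)$, justified by ``boundedness of the $\rho$-ratios'', is false there: the conditions for $r=\tfrac35$ ($\sum_i\rho_i<\infty$, $(r_i)$ monotone) give only $\rho_{m+k}/\rho_k\leq1$, hence $\gamma_{m+k}\leq r^m\gamma_k$, and provide \emph{no} lower bound on $\rho_{m+k}/\rho_k$. Concretely, take $\rho_j=2^{-j}$, $r_j=\tfrac35(1-2^{-j})$; this is an admissible sequence of matching pairs, and since $r_i\leq r$ one gets $\gamma_{m+k}^{-1}\geq 2^m r^{-m}\gamma_k^{-1}$. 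Consequently, for any $\tilde\phi\in\mathcal{F}_{\mathcal{R}}$ with $\mathcal{E}^I_{\mathcal{R}}(\tilde\phi,\tilde\phi)>0$, your Dirichlet test function $\phi_w=\tilde\phi\circ G_w^{-1}$ satisfies
\begin{align*}
\mathcal{E}^{K_w}(\phi_w,\phi_w)\ \geq\ \sum_{k\geq1}\frac{1}{\gamma_{m+k}}\mathcal{D}^I_k(\tilde\phi,\tilde\phi)\ \geq\ 2^m\,r^{-m}\,\mathcal{E}^I_{\mathcal{R}}(\tilde\phi,\tilde\phi),
\end{align*}
so its Rayleigh quotient is of order $2^m(3/r)^m$ rather than $(3/r)^m$: your derivation of $\lambda_1^{D,(w)}\leq C'(3/r)^m$ collapses (this test function only yields the strictly smaller exponent $\ln 3/\ln(6/r)$), and it collapses precisely in the case that reproduces the Sierpi\'nski-gasket value $\tfrac{\ln9}{\ln5}$. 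The missing idea is the one the paper's Part II.1 is built around: the Dirichlet test function must carry \emph{no line energy at all}. The paper takes the harmonic extension of the vertex function equal to $1$ on one $(m+2)$-cell of $K_w$ and on all vertices adjoined to it, and $0$ elsewhere; this extension is constant on every edge, so its energy is purely fractal and equals $6\delta_{m+2}^{-1}\leq\tfrac6{\kappa_1}r^{-(m+2)}$, and the problematic $\gamma$-scaling never enters. Your argument can be repaired by additionally requiring $\tilde\phi$ to be constant on every edge $e^w_i$ (equivalently, to factor through the quotient collapsing the edges, which is the Sierpi\'nski gasket); such functions exist and the rest of your lower bound then goes through. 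In short: you correctly defused the measure-scaling trap ($\mu_l$ scaling like $\beta^m$) by testing only against fractal mass, but missed the energy-scaling trap on the line part, which is exactly where the paper's care is invested.
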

This value is the leading term in the spectral asymptotics. We will call it the spectral dimension of the Hanoi attractor. For $r=\frac 35$ this is the same value as for the Sierpi\'nski Gasket with the usual resistance form \cite{kl93}. Another observation is, that the measure scaling parameter $\beta$ of the line part does not show in the leading term. \\

We see that the choice of the sequence of matching pairs has a big influence on the analysis on $K$. \\
\begin{rem}
With $\dim_{H,R_\mathcal{R}}(K)=\frac{\ln 3}{-\ln r}$ it holds, that
\begin{align*}
d^{\mathcal{R}}_S(K)=\frac{2\dim_{H,R_\mathcal{R}}(K)}{\dim_{H,R_\mathcal{R}}(K)+1}
\end{align*}

This relation was shown to hold for p.c.f. self similar sets in \cite{kl93} and is now valid for a non self similar set.\end{rem}
\section{Proof of Theorem \ref{theo51}}\label{chap6}
Since $\mu$ and $\mathcal{R}$ are fixed throughout the whole proof we will omit them in the following whenever it is clear.

The main technique for the proof is the Dirichlet-Neumann bracketing as in \cite{kaj10}, where it was applied to self-similar sets. We split the proof in the upper and lower estimate.\\

\textbf{I: Upper estimate}\nopagebreak\\[0.2cm]
The upper bound is obtained by successively adding new Neumann boundary conditions at the points $V_m\backslash V_0$ thus making the domain bigger and therefore increasing the eigenvalue counting function. This is done by defining the domains
\begin{align*}
\mathcal{D}_{K_m}:&=\{ u \in L_2(K_m,\mu|_{K_m}) : \exists f\in \mathcal{D}: f|_{K_m}=u\}\\
&=\{u\in L_2(K,\mu): u|_{K_m^c}=0, \exists f\in \mathcal{D}: f|_{K_m}=u\}\\[.1cm]
\mathcal{D}_{J_m}:&=\{u\in L_2(J_m,\mu|_{J_m}) \exists f\in\mathcal{D}: f|_{J_m}=u\}\\
&=\{u\in L_2(K,\mu): u|_{J_m^c}=0 , \exists f\in \mathcal{D}: f|_{J_m}=u\}
\end{align*}
Considering $\mathcal{D}_{J_m}$ we see, that if we take $f$ to be harmonic on all of the $m$-cells, we get
\begin{align*}
\mathcal{D}_{J_m}=\bigoplus_{\begin{array}{c} w\in\mathcal{A}^n, n< m \\ i\in\{1,2,3\}\end{array} } H^1(e^i_w)  
\end{align*}
It is obvious, that $\mathcal{D}_{K_m}\perp \mathcal{D}_{J_m}$ and
\begin{align*}
\mathcal{D}\subset \mathcal{D}_{K_m}\oplus \mathcal{D}_{J_m}
\end{align*}
On this bigger domain we define the form $\tilde{\mathcal{E}}$ on $f=g+h$, with $g\in \mathcal{D}_{K_m}, h\in\mathcal{D}_{J_m}$
\begin{align*}
\tilde{\mathcal{E}}(f,f):=\mathcal{E}^\Sigma (g,g) + \sum_{k=m+1}^\infty \frac{1}{\gamma_k}\mathcal{D}_k^I(g,g) + \sum_{k=1}^m \frac{1}{\gamma_k}\mathcal{D}_k^I(h,h)
\end{align*}
and
\begin{align*}
\mathcal{E}_{K_m}(g,g)&=\mathcal{E}^\Sigma(g,g)+\sum_{k=m+1}^\infty\frac 1{\gamma_k}\mathcal{D}^I_k(g,g)\\
\mathcal{E}_{J_m}(h,h)&=\sum_{k=1}^m \frac 1{\gamma_k}\mathcal{D}^I_k(h,h)
\end{align*}
\begin{lem} $(\tilde{\mathcal{E}},\mathcal{D}_{K_m}\oplus\mathcal{D}_{J_m})$, $(\mathcal{E}_{K_m},\mathcal{D}_{K_m})$ and $(\mathcal{E}_{J_m},\mathcal{D}_{J_m})$ are regular Dirichlet forms with discrete non negative spectrum and $\tilde{\mathcal{E}}=\mathcal{E}_{K_m}\oplus\mathcal{E}_{J_m}$.\label{lem61}\end{lem}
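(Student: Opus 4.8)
The plan is to read the block structure directly off the definitions and then reduce everything to a \emph{finite} orthogonal direct sum of forms that are already known to be regular Dirichlet forms with discrete non-negative spectrum. First I would establish the identity $\tilde{\mathcal{E}}=\mathcal{E}_{K_m}\oplus\mathcal{E}_{J_m}$. Since $J_m=K\backslash K_m$ and the separating vertex set $V_m$ is finite and hence $\mu$-null ($\mu$ is non-atomic), we have the orthogonal splitting $L^2(K,\mu)=L^2(K_m,\mu|_{K_m})\oplus L^2(J_m,\mu|_{J_m})$, and $\mathcal{D}_{K_m}\perp\mathcal{D}_{J_m}$ as already observed. A function $g\in\mathcal{D}_{K_m}$ vanishes on the coarse edges constituting $J_m$, so $\mathcal{D}^I_k(g,g)=0$ for $k\le m$; conversely $h\in\mathcal{D}_{J_m}$ vanishes on $\Sigma$ and on every fine edge ($k>m$), so it contributes neither to $\mathcal{E}^\Sigma$ nor to the tail sum. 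Polarising the quadratic form then makes the mixed term $\tilde{\mathcal{E}}(g,h)$ vanish, and the defining formula for $\tilde{\mathcal{E}}$ yields $\tilde{\mathcal{E}}(g+h,g+h)=\mathcal{E}_{K_m}(g,g)+\mathcal{E}_{J_m}(h,h)$ directly.

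Next I would treat $\mathcal{E}_{J_m}$. By the displayed description $\mathcal{D}_{J_m}=\bigoplus H^1(e^i_w)$ this is a finite direct sum (only finitely many words $w$ with $|w|<m$ occur) of standard one-dimensional Dirichlet forms $\frac1{\gamma_k}\mathcal{D}^I_k$ on single edges, each taken with respect to the scaled Lebesgue-type measure $\mu|_{e^i_w}$. Every summand is the classical regular Dirichlet form of a Neumann Laplacian on an interval, whose spectrum is discrete and non-negative, and a finite orthogonal direct sum inherits these properties; hence $(\mathcal{E}_{J_m},\mathcal{D}_{J_m})$ is a regular Dirichlet form with discrete non-negative spectrum.

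For $\mathcal{E}_{K_m}$ I would exploit that the $m$-cells $K_w$, $w\in\mathcal{A}^m$, are pairwise disjoint and form the connected components of $K_m$ (the stretched structure separates them, joining distinct cells only through edges of $J_m$). Consequently $\mathcal{D}_{K_m}$ decouples cellwise, $\mathcal{D}_{K_m}=\bigoplus_{w\in\mathcal{A}^m}\{g|_{K_w}\}$. Applying the corollary to Lemma \ref{lem21} (scaling of $\mathcal{E}^\Sigma$) together with the edge-rescaling computation of chapter \ref{chap2} (with $\tilde\gamma_k=\gamma_{m+k}$), the form splits as $\mathcal{E}_{K_m}=\bigoplus_{w\in\mathcal{A}^m}\mathcal{E}_{K_w}$, where via the homeomorphism $G_w$ each $\mathcal{E}_{K_w}$ is a constant multiple of the Hanoi resistance form $\mathcal{E}_{\mathcal{R}^{(m)}}$ associated with the shifted sequence of matching pairs $\mathcal{R}^{(m)}=(r_{m+i},\rho_{m+i})_{i\ge1}$. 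Since $\mathcal{R}^{(m)}$ is a tail of $\mathcal{R}$ it again fulfills the conditions, so Lemma \ref{lem41} and Lemma \ref{lem43} apply to each $\mathcal{E}_{K_w}$, giving a regular Dirichlet form with discrete non-negative spectrum on $L^2(K_w,\mu|_{K_w})$; the finite direct sum then gives the same for $(\mathcal{E}_{K_m},\mathcal{D}_{K_m})$.

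Finally, combining the two blocks, $\tilde{\mathcal{E}}=\mathcal{E}_{K_m}\oplus\mathcal{E}_{J_m}$ is a regular Dirichlet form on $L^2(K,\mu)$ whose spectrum is the union of the two discrete non-negative spectra, hence itself discrete and non-negative. I expect the principal obstacle to be the cellwise decoupling of $\mathcal{E}_{K_m}$: one must verify that restriction to a single cell $K_w$ reproduces precisely the domain $\mathcal{F}_{\mathcal{R}^{(m)}}$ of the shifted resistance form, so that Lemma \ref{lem41} is genuinely applicable, and check that the rescaling constants arising from the corollary and from chapter \ref{chap2} combine into a single positive factor. By contrast the one-dimensional part $\mathcal{E}_{J_m}$ and the assembly of the orthogonal direct sum are routine.
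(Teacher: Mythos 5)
Your proposal is correct, but for the fractal block $(\mathcal{E}_{K_m},\mathcal{D}_{K_m})$ it takes a genuinely different route from the paper. The paper handles that block abstractly: since $K_m$ is closed, it invokes Kigami's trace theorem \cite[Theo.~8.4]{kig12} to get a regular resistance form on $K_m$ whose resistance metric is $R|_{K_m\times K_m}$, hence a regular Dirichlet form by \cite[Theo.~9.4]{kig12}; compactness of $(K_m,R|_{K_m})$ then lets the compact-embedding argument of Lemma~\ref{lem43} run verbatim. (The line part and the identity $\tilde{\mathcal{E}}=\mathcal{E}_{K_m}\oplus\mathcal{E}_{J_m}$ are dispatched in the paper exactly as you do: a finite sum of scaled one-dimensional Dirichlet energies, and a direct-sum identity that is essentially definitional.) You instead exploit the geometry: the $m$-cells are pairwise disjoint, so $\mathcal{D}_{K_m}$ decouples cellwise, and under $G_w$ each cell form is exactly $\delta_m^{-1}\mathcal{E}_{\mathcal{R}^{(m)}}(\cdot\circ G_w,\cdot\circ G_w)$ for the shifted compatible sequence $\mathcal{R}^{(m)}=(r_{m+i},\rho_{m+i})_{i\geq 1}$ --- an exact-identity refinement of the inequalities of chapter~\ref{chap2} --- so that Lemmas~\ref{lem41} and \ref{lem43} can be recycled on each copy. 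This is sound: the tail $\mathcal{R}^{(m)}$ is again a sequence of matching pairs (and inherits the conditions, though regularity and discreteness never actually use them). What your route buys is self-containedness and an explicit scaling constant $\delta_m^{-1}$ that is in the spirit of the eigenvalue estimates that follow; what the paper's route buys is brevity and independence from the cell structure. One caveat you should make explicit: Lemmas~\ref{lem41} and \ref{lem43} are stated for the measure $\mu=\frac 12(\mu_f+\mu_l)$, whereas transporting $\mu|_{K_w}$ by $G_w$ yields, up to normalization, $\frac 12\left(3^{-m}\mu_f+\beta^m\mu_l\right)$, a different convex combination of the two parts; this is harmless because the proofs of those lemmas use only that the measure is a finite Borel measure of full support on the compact space $(K,R_{\mathcal{R}^{(m)}})$, but since the lemmas as stated do not literally apply, that remark is needed to close your argument, alongside the domain identification you already flagged.
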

\begin{proof}
$(\mathcal{E}_{J_m},\mathcal{D}_{J_m})$ is just the sum of scaled Dirichlet energys on one-dimensional edges, hence it is a regular Dirichlet form on $L_2(J_m,\mu|_{J_m})$ with discrete non-negative spectrum. Since $K_m$ is closed $(\mathcal{E}_{K_m},\mathcal{D}_{K_m})$ is a regular resistance form due to \cite[Theo. 8.4]{kig12} and hence a regular Dirichlet form on $L_2(K_m,\mu|_{K_m})$ with \cite[Theo. 9.4]{kig12}. Due to the same Theorem \cite[Theo. 8.4]{kig12} it follows that the associated resistance metric equals the restriction of $R$ to ${K_m}\times{K_m}$. Since $K_m$ is closed therefore $(K_m,R|_{K_m})$ is compact. The rest of the argument works like the proof of Lemma~\ref{lem43}.
The results for $\tilde{\mathcal{E}}$ follow immediately.
\end{proof}
We denote by $N(\mathcal{E},\mathcal{D},x)$ the eigenvalue counting function of a regular Dirichlet form which is the same as the one for the associated self adjoint operator.
For the eigenvalue counting functions of the mentioned forms this means:
\begin{align*}
N_N(x)\leq N(\mathcal{E}_{K_m},\mathcal{D}_{K_m},x)+ N(\mathcal{E}_{J_m},\mathcal{D}_{J_m},x), \ \forall x\geq 0
\end{align*}

The introduction of the Neumann boundary conditions at $V_m\backslash V_0$ leads to the decoupling of the $m$-cells and the edges adjoining them. Therefore the calculations can be done seperately.\\

\textbf{I.1: Fractal part $(\mathcal{E}_{K_m},\mathcal{D}_{K_m})$}\nopagebreak\\[0.2cm]
Define a set of measures on $K$ as follows
\begin{align*}
\mu^w:=\mu(K_w)^{-1}\mu\circ G_w
\end{align*}
$\mu^w$ is a measure on the whole $K$ but it just reflects the features of $\mu$ on $K_w$. We notice, that
\begin{align*}
\mu^w(K)=\mu(K_w)^{-1}\mu(K_w)=1, \ \forall w
\end{align*}
as well as
\begin{align*}
\int_Ku\circ G_w d\mu^w=\mu(K_w)^{-1}\int_{K_w}ud\mu 
\end{align*}
In the following proof we use the so called \textit{uniform poincar\'e inequality} (see \cite{kaj10}) for a $C_{PI}\in(0,\infty) $ and all $u\in \mathcal{D}$:
\begin{align*}
\mathcal{E}(u,u)\geq C_{PI}\int_K|u-\bar u^{\mu^w}|^2d\mu^w
\end{align*}
where $\bar u^\nu=\int_K ud\nu$. The constant $C_{PI}$ is independent of $w$. That this holds can be seen easily: Let $M:=\sup_{p,q\in K} R_\mathcal{R}(p,q)<\infty$. 
\begin{align*}
M\mathcal{E}(u,u)\geq R_\mathcal{R}(p,q)\mathcal{E}(u,u)&\geq |u(p)-u(q)|^2\\[0.2cm]
\Rightarrow \int_K\int_K M\mathcal{E}(u,u) d\mu^w(q)d\mu^w(p)&\geq \int_K\int_K |u(p)-u(q)|^2d\mu^w(q)d\mu^w(p)\\
&\geq \int_K \left( u(p)-\int_K u(q)d\mu^w(q)\right)^2d\mu^w(p)\\
&=\int_K|u(p)-\bar u^{\mu^w}|^2d\mu^w(p)\\[0.2cm]
\Rightarrow \mathcal{E}(u,u)\geq \frac 1{M\mu^w(K)^2}\int_K |u-\bar u^{\mu^w}|^2d\mu^w &=\frac 1M\int_K |u-\bar u^{\mu^w}|^2d\mu^w
\end{align*}
Since there are $3^m$ independent cells in $\mathcal{D}_{K_m}$ the $3^m$ first eigenvalues are all $0$, because the functions that are constant on each $m$-cell are in $\mathcal{D}_{K_m}$. We are interested in the first non-zero eigenvalue $\lambda^m_{3^m+1}$.

Let $u\in\mathcal{D}_{K_m}$ be a normalized eigenfunction to the eigenvalue $\lambda_{3^m+1}^m$, then $u$ is orthogonal to every $v$ that is constant on the $m$-cells (since this is a linear combination of eigenfunctions to lower eigenvalues). 
\begin{align*}
\lambda^m_{3^m+1}&=\mathcal{E}_{K_m}(u,u)\\
&=\mathcal{E}^\Sigma(u,u)+ \sum_{k=m+1}^\infty \frac 1{\gamma_k}\mathcal{D}^I_k(u,u)\\
&\geq \mathcal{K}r^{-m}\sum_{w\in\mathcal{A}^m} \mathcal{E}(u\circ G_w,u\circ G_{w})\\
&\stackrel{PI}{\geq}\mathcal{K}r^{-m} \sum_{w\in\mathcal{A}^m}C_{PI}\underbrace{\int_K |u\circ G_w - \overline{u\circ G_w}^{\mu^w}|^2d\mu^w}_{=:\star}
\end{align*}
 For $\star$ we have
\begin{align*}
&\int_K (u\circ G_w -\overline{u\circ G_w}^{\mu^w})^2d\mu^w\\
&\hspace*{1.5cm}=\int_K (u\circ G_w)^2d\mu^w - 2\int_K u\circ G_w\cdot \overline{u\circ G_w}^{\mu^w}d\mu^w + \underbrace{\int_K  (\overline{u\circ G_w}^{\mu^w})^2d\mu^w}_{\geq 0}\\
&\hspace*{1.5cm}\geq \frac 1{\mu(K_w)}\int_{K_w} (u)^2d\mu-2\frac 1{\mu(K_w)}\underbrace{\int_{K_w} u \cdot \overline{u\circ G_w}^{\mu^w}d\mu}_{=0, \text{ since u orth. on const.}}\\
&\hspace*{1.5cm}= \frac 1{\mu(K_w)} \int_{K_w} u^2d\mu 
\end{align*}
\begin{align*}
\Rightarrow \lambda^m_{3^m+1} &\geq \mathcal{K}r^{-m}\sum_{w\in\mathcal{A}^m} C_{PI}\frac 1{\mu(K_w)} \int_{K_w} u^2d\mu \\
&\geq r^{-m} \frac {\mathcal{K}\cdot C_{PI}}{\max \mu(K_w)}\int_K u^2d\mu \\
&\geq \frac{r^{-m}}{3^{-m}} \mathcal{K}\cdot C_{PI}=C_u\left(\frac 3r\right)^m
\end{align*}
We have, $\lambda_{3^m+1}^m\geq C_u (3/r)^m$, that means
 $$x< C_u(3/r)^m \Rightarrow N(\mathcal{E}_{K_m},\mathcal{D}_{K_m},x)\leq 3^m$$
For $x\geq C_u$ take $m\in \mathbb{N}$ such that $C_u(3/r)^{m-1}\leq x<  C_u(3r^{-1})^m$
\begin{align*}
\Rightarrow N(\mathcal{E}_{K_m},\mathcal{D}_{K_m},x)&\leq 3^m \leq 3\cdot 3^{m-1}= 3 \left( \left(\frac 3r\right)^{\frac{\ln(3)}{\ln(3/r)}}\right)^{m-1}\\
&=3 \left(\left( \frac 3r\right)^{m-1}\right)^{\frac{\ln(3)}{\ln(3/r)}}\leq 3\left( \frac x{C_u} \right)^{\frac{\ln(3)}{\ln(3/r)}}\\
&\leq \underbrace{3 C_u^{-{\frac{\ln(3)}{\ln(3/r)}}}}_{C_2^\prime:=} x^{\frac{\ln(3)}{\ln(3/r)}}
\end{align*}

\textbf{I.2: Line part $(\mathcal{E}_{J_m},\mathcal{D}_{J_m})$}\nopagebreak\\[0.2cm]
Due to the decoupling through the Neumann boundary conditions the domain and form split into
\begin{align*}
\mathcal{E}_{J_m}&=\bigoplus_{\begin{array}{c} w\in\mathcal{A}^n, n< m \\ i\in\{1,2,3\}\end{array} }\frac 1{\gamma_{|w|+1}} \int_0^1 \left(\frac{d(\cdot\circ \xi_{e^i_w})}{dx}\right)^2 d\mu\\
\mathcal{D}_{J_m}&=\bigoplus_{\begin{array}{c} w\in\mathcal{A}^n, n< m \\ i\in\{1,2,3\}\end{array} }H^1(e^i_w)
\end{align*}
Then it holds for the eigenvalue counting function that
\begin{align*}
N(\mathcal{E}_{J_m},\mathcal{D}_{J_m},x)=\sum_{\begin{array}{c} w\in\mathcal{A}^n, n< m \\ i\in\{1,2,3\}\end{array} }N\left(\frac 1{\gamma_{|w|+1}} \int_0^1 \left(\frac{d(\cdot\circ \xi_{e^i_w})}{dx}\right)^2 d\mu, H^1(e_w^i),x\right)
\end{align*}
The scaling parameter for the measure on the line part scales the integral in the following way:
\begin{align*}
\frac 1{\gamma_{|w|+1}}\int_0^1 \left(\frac{d(u\circ \xi_{e^i_w})}{dx}\right)d\mu =\frac 1{\gamma_{|w|+1}\frac 12  a\beta^{|w|}}\int_0^1\left(\frac{d(u\circ \xi_{e^i_w})}{dx}\right)^2dx
\end{align*}
Therefore there is a 1:1 correspondence of the eigenvalues between the standard Neumann Laplacian on $(0,1)$ and the restriction of the energy to one edge.
\begin{align*}
N\left(\frac 1{\gamma_{|w|+1}} \int_0^1 \left(\frac{d(\cdot\circ \xi_{e^i_w})}{dx}\right)^2 d\mu, H^1(e_w^i),x\right)=N(-\Delta_N|_{(0,1)},\tfrac 12 a\beta^{|w|}\gamma_{|w|+1}x)
\end{align*}
With 
\begin{align*}
N(-\Delta_N|_{(0,1)},x)\leq \frac 1\pi \sqrt{x}+1, \ \forall x\geq 0
\end{align*}
we get
\begin{align*}
N(\mathcal{E}_{J_m},\mathcal{D}_{J_m},x)&\leq \sum_{k=1}^m\sum_{j=1}^{3^k} N(-\Delta_N|_{(0,1)},\tfrac 12  a\beta^{k-1}\gamma_kx)\\
&\leq \sum_{k=1}^m\sum_{j=1}^{3^k} \frac 1\pi \sqrt{\tfrac 12 a\beta^{k-1}\gamma_k x}+1\\
&=\sum_{k=1}^m \frac {3^k}\pi\sqrt{\tfrac 12 a\beta^{k-1}\gamma_k x} + 3^k\\
&\leq\frac 32 (3^m-1)+ \frac{\sqrt{a}}{\sqrt{2}\pi}\sqrt{x}\sum_{k=1}^m \sqrt{9^k\beta^{k-1} \kappa_2 r^{k-1}}\\
&\leq \frac 32 3^m+  \frac{3\sqrt{a\kappa_2}}{\sqrt{2}\pi}\sqrt{x}\sum_{k=0}^{m-1} \sqrt{9\beta r}^{k}
\end{align*}
From here on we have to distinguish a few cases. For now assume that $\frac 1{9r}<\beta$:
\begin{align*}
N(\mathcal{E}_{J_m},\mathcal{D}_{J_m},x)&\leq \frac 32 3^m+ \frac{3\sqrt{a\kappa_2}}{\sqrt{2}\pi(\sqrt{9\beta r}-1)}\sqrt{9\beta r}^m \sqrt x
\end{align*}
For the fractal part we looked for the $m$ for which $C_u(3/r)^{m-1}\leq x <C_u(3/r)^m$. Therefore
\begin{align*}
N(\mathcal{E}_{J_m},\mathcal{D}_{J_m},x)&\leq \frac 32 3^m + \frac{3\sqrt{a\kappa_2}}{\sqrt{2}\pi(\sqrt{9\beta r}-1)} \sqrt{9\beta r}^m \sqrt{C_u \frac 3r}^m\\
&= \frac 32 3^m+ \frac{3\sqrt{a\kappa_2C_u}}{\sqrt{2}\pi(\sqrt{9\beta r}-1)}\sqrt{9\beta 3}^m
\end{align*}
Since $\beta <\frac 13$ we get a constant $C_2^{\prime \prime}$, such that for $x$ with $C_u(3/r)^{m-1}\leq x <C_u(3/r)^m$ we have
\begin{align*}
N(\mathcal{E}_{J_m},\mathcal{D}_{J_m},x)\leq C_2^{\prime \prime} \cdot 3^m
\end{align*}
For $\beta =\frac 1{9r}$ we can change to $\tilde \beta =\beta + \epsilon$ with $\frac 1{9r}< \tilde \beta <\frac 13$ and get the same results.\\

With the same calculations as for the fractal part we get the same order $\frac{\ln(3)}{\ln(3/r)}$ for the upper bound. That means for $x\geq C_u$ there exists a constant $C_2$, such that 
\begin{align*}
N_N(x)\leq C_2 x^{\frac{\ln(3)}{\ln(3/r)}}
\end{align*}

Now we go back to earlier and handle the case $\beta<\frac 19r$. In this case we have
\begin{align*}
N(\mathcal{E}_{J_m},\mathcal{D}_{J_m},x)&\leq \frac 32 3^m+  \frac{3\sqrt{a\kappa_2}}{\sqrt{2}\pi}\sqrt{x}\sum_{k=0}^{m-1} \sqrt{9\beta r}^{k}\\
&\leq \frac 32 3^m+  \frac{3\sqrt{a\kappa_2}}{\sqrt{2}\pi}\sqrt{x}\sum_{k=0}^{\infty} \sqrt{9\beta r}^{k}\\
&\leq \frac 32 3^m + \frac{3\sqrt{a\kappa_2}}{\sqrt{2}\pi}\frac 1{1-\sqrt{9\beta r}}\sqrt{x}
\end{align*}
The first part handles exactly as before to give the same order as in the fractal case and the latter part is of lower order, or the same order if $r=\frac 13$. Therefore we have the desired result.\\[.2cm]

\textbf{II: Lower estimate} \nopagebreak\\[0.2cm] 
The idea here is to successively add new Dirichlet boundary conditions on the points $V_m$ thus lowering the eigenvalue counting function. 
\begin{align*}
 \mathcal{D}_m^0&:=\{u\in \mathcal{D}^0 : u|_{V_m}\equiv 0\} \\
 \mathcal{D}^0_w&:=\{u\in\mathcal{D}_m^0: u|_{K_w^c}\equiv 0\},\ w\in\mathcal{A}^m\\
 \mathcal{D}^0_{e_i^w}&:=\{u\in\mathcal{D}_m^0: u|_{(e_i^w)^c} \equiv 0\},\ w\in\mathcal{A}^m, i\in \{1,2,3\}
\end{align*}

Then we have
\begin{lem} $(\mathcal{E}|_{\mathcal{D}^0_m\times \mathcal{D}^0_m},\mathcal{D}_m^0)$, $(\mathcal{E}|_{\mathcal{D}^0_w\times \mathcal{D}^0_w},\mathcal{D}_w^0)$ and $(\mathcal{E}|_{\mathcal{D}^0_{e_i^w}\times \mathcal{D}^0_{e_i^w}},\mathcal{D}^0_{e_i^w})$ are regular Dirichlet forms with discrete non negative spectrum.\label{lem62}\end{lem}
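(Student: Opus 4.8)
The plan is to handle the three forms exactly as in the proofs of Lemmas \ref{lem41}--\ref{lem43} and \ref{lem61}, since each of the three domains arises from the full Dirichlet domain $\mathcal{D}^0$ by imposing finitely many additional pointwise zero conditions and, in two of the cases, a support restriction to a closed piece of $K$.

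First I would deal with the resistance-form structure. Because $V_m$ is finite, restricting the resistance form $(\mathcal{E}|_{\mathcal{D}^0\times\mathcal{D}^0},\mathcal{D}^0)$ of Lemma \ref{lem42} to the functions vanishing on $V_m$ yields, by \cite[Prop. 2.19]{kig03}, again a resistance form $(\mathcal{E}|_{\mathcal{D}_m^0\times\mathcal{D}_m^0},\mathcal{D}_m^0)$; this is the same mechanism already used to pass from $\mathcal{D}$ to $\mathcal{D}^0$. For a single cell I would first restrict the ambient resistance form to the closed set $K_w$ via \cite[Theo. 8.4]{kig12} (as was done for $K_m$ in the proof of Lemma \ref{lem61}) and then again apply \cite[Prop. 2.19]{kig03} with the finite boundary $V_m\cap K_w=G_w(V_0)$, so that $(\mathcal{E}|_{\mathcal{D}_w^0\times\mathcal{D}_w^0},\mathcal{D}_w^0)$ is a resistance form on $K_w$ with Dirichlet data at the corners of the cell. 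For an edge the restricted form is simply the one-dimensional Dirichlet energy on the interval parametrised by $\xi_{e_i^w}$ with vanishing endpoint values, i.e. (up to the affine reparametrisation and the scale $\gamma_{|w|+1}$) the classical Dirichlet Laplacian on $(0,1)$; this is elementary and immediately a regular Dirichlet form with discrete non-negative spectrum.

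Next I would establish that each of these is a regular Dirichlet form on the appropriate $L^2$-space ($L^2(K\backslash V_m,\mu)$, $L^2(K_w,\mu|_{K_w})$ and $L^2(e_i^w,\mu|_{e_i^w})$, respectively). Being resistance forms on compact spaces, \cite[Theo. 9.4]{kig12} turns them into Dirichlet forms, and regularity reduces---exactly as in Lemma \ref{lem42}---to the density of each domain, in $\|\cdot\|_\infty$, in the corresponding space of continuous functions that vanish at the prescribed finite boundary set; this density is inherited from the regularity established in Lemmas \ref{lem41} and \ref{lem42}. For the discrete non-negative spectrum I would then use compactness: each of $K$, $K_w$ and $e_i^w$ is compact in the resistance metric (for $K_w$ the trace metric from \cite[Theo. 8.4]{kig12}), so the compact-embedding argument of Lemma \ref{lem43} (via \cite[Lemma 9.7]{kig12} and \cite[Theo. 5 Chap. 10]{bs87}) applies verbatim. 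Equivalently, since $\mathcal{D}_w^0,\mathcal{D}_{e_i^w}^0\subset\mathcal{D}_m^0\subset\mathcal{D}^0$ are $\mathcal{E}_1$-closed subspaces and $(\mathcal{E},\mathcal{D}^0)$ already has discrete spectrum by Lemma \ref{lem43}, discreteness passes to the subspaces by \cite[Theo. 4 Chap. 10]{bs87}.

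The lemma is therefore essentially a reassembly of the tools already assembled for Lemmas \ref{lem41}--\ref{lem43} and \ref{lem61}, and I do not expect a genuine obstacle. The one point I would be careful about is purely organisational: identifying, for each of the three constructions, exactly which finite set of points carries the imposed zero boundary condition and on which compact subset the form lives, so that \cite[Prop. 2.19]{kig03} and \cite[Theo. 8.4]{kig12} are invoked with the correct data; once this bookkeeping is fixed, no new analytic difficulty beyond the finiteness of $V_m$ remains.
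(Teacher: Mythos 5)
Your proposal is correct and follows essentially the same route as the paper, whose proof is simply a one-line reference: the form on $\mathcal{D}_m^0$ is handled exactly as in Lemma \ref{lem42} (via \cite[Prop. 2.19]{kig03} and the finiteness of $V_m$), and the forms on cells and edges exactly as in Lemma \ref{lem61} (restriction to closed sets via \cite[Theo. 8.4]{kig12}, then \cite[Theo. 9.4]{kig12} and the compactness argument of Lemma \ref{lem43}). Your write-up merely makes explicit the bookkeeping that the paper leaves implicit, namely that the cell case combines both mechanisms (restriction to $K_w$ plus Dirichlet conditions at $G_w(V_0)$), which is a faithful and correct reading of what the paper's terse proof intends.
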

\begin{proof} The proof for $(\mathcal{E}|_{\mathcal{D}^0_m\times \mathcal{D}^0_m},\mathcal{D}_m^0)$ works just like the one of Lemma \ref{lem42} and the rest like Lemma \ref{lem61}.\end{proof}
We also get the following estimate 
$$ N(\mathcal{E}_m^0,\mathcal{D}_m^0, x)\leq N_D(x), \ \forall x\geq 0 $$

Due to the finite ramification and the condition, that the functions in $\mathcal{D}_m^0$ have to be zero in $V_m$, this domain splits into the domain restricted to the different parts.
\begin{align*}
\mathcal{D}_m^0&=\left(\bigoplus_{w \in\mathcal{A}^m} \mathcal{D}_w^0\right)\bigoplus \left(\bigoplus_{\begin{array}{c} w\in\mathcal{A}^n, n< m \\ i\in\{1,2,3\}\end{array} } \mathcal{D}^0_{e_i^w}\right)
\end{align*}
That means for the eigenvalue counting function $\forall x\geq 0$
\begin{align*}
\sum_{w \in \mathcal{A}^m}N(\mathcal{E}|_{\mathcal{D}_w^0\times \mathcal{D}_w^0},\mathcal{D}_w^0,x) + \sum_{\begin{array}{c} w\in\mathcal{A}^n, n< m \\ i\in\{1,2,3\}\end{array} } N(\mathcal{E}|_{\mathcal{D}_{e_i^w}^0\times \mathcal{D}_{e_i^w}^0}, \mathcal{D}_{e_i^w}^0,x)\leq N_D(x) 
\end{align*}

Again due to the decoupling, the individual eigenvalue counting functions can be calculated seperately.\\

\textbf{II.1: Fractal part $(\mathcal{E}|_{\mathcal{D}^0_w\times \mathcal{D}^0_w},\mathcal{D}^0_w)$}\nopagebreak\\[0.2cm]
We want to get an upper estimate on the first eigenvalue of $(\mathcal{E}|_{\mathcal{D}_w^0\times \mathcal{D}_w^0},\mathcal{D}_w^0)$ which is positive due to the Dirichlet boundary conditions. This estimate gives us a lower estimate for $N(\mathcal{E}|_{\mathcal{D}_w^0\times \mathcal{D}_w^0},\mathcal{D}_w^0,x)$. The first eigenvalue can be calculated via the following fact
\begin{align*}
\lambda_1^w &= \inf_{u\in\mathcal{D}^0_w} \frac{\mathcal{E}(u,u)}{||u||^2}\\
\Rightarrow \lambda_1^w &\leq \frac{\mathcal{E}(u,u)}{||u||^2}, \text{ for each }u\in \mathcal{D}^0_w
\end{align*}
The idea is to find an $u\in\mathcal{D}^0_w$ which is "good enough".\\
\begin{figure}[H]
\centering
\includegraphics[scale=0.1]{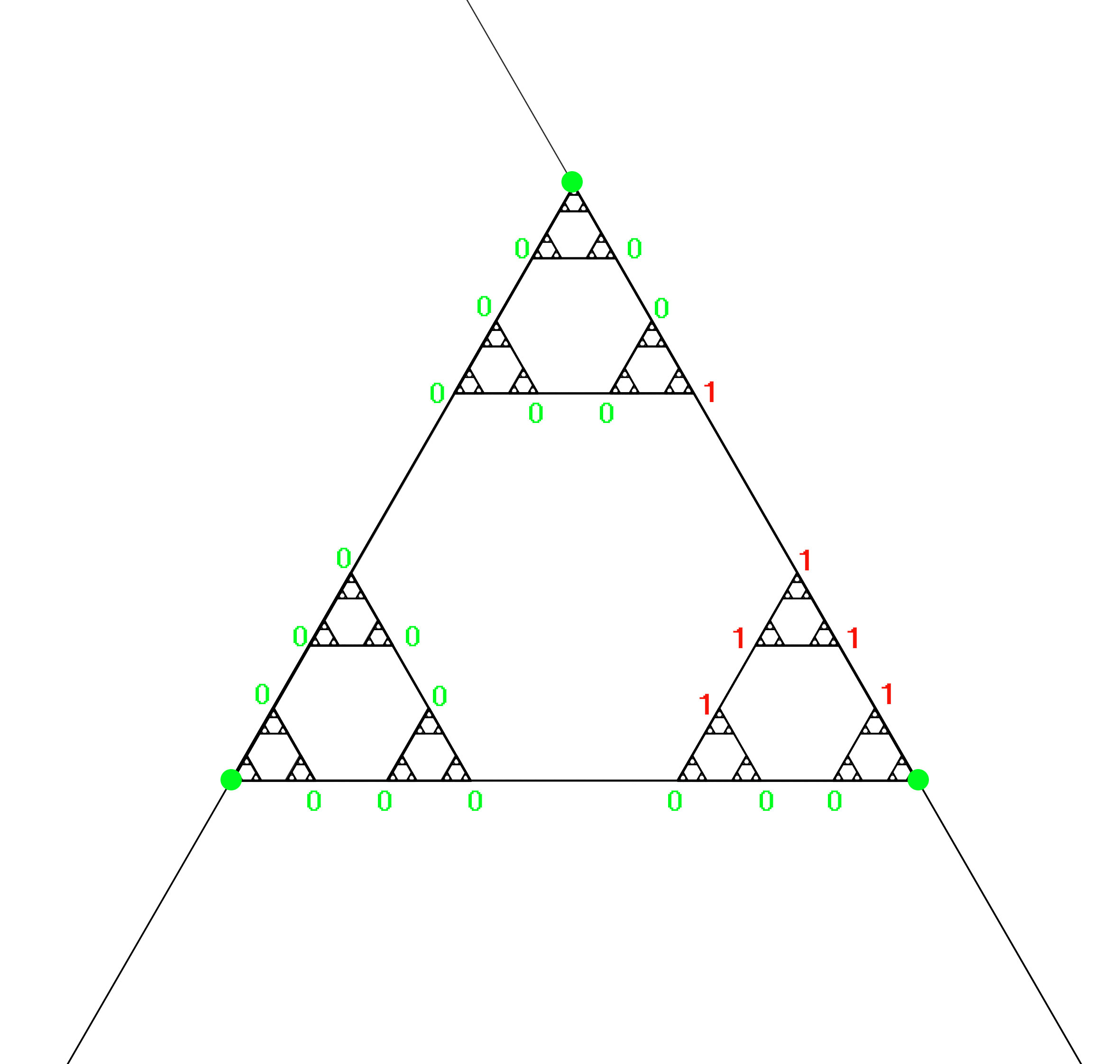}
\caption{Construction of $u_m$}
\label{dirich}
\end{figure}
In $K_w$ we look for the biggest cell where there are no Dirichlet boundary conditions. This is a $m+2$-cell (choose any of those). Set
\begin{align*}
\tilde u_m|_{V_{m+2}}=\begin{cases} 1&, \ \text{on this cell and any adjoined vertices}\\0&, \ \text{anywhere else}\end{cases}
\end{align*}
Then extend $\tilde u_m$ harmonic to $u_m\in \mathcal{D}_w^0$. The energy of this function is calculated by
\begin{align*}
\mathcal{E}(u_m,u_m)&=6 \cdot \delta_{m+2}^{-1}\\
&\leq \frac 6{\kappa_1} r^{-(m+2)}
\end{align*}

We need a lower estimate for the $L^2$-norm of $u_m$ to get an upper estimate of $\lambda_1^w$. There is a $m+2$-cell $K_{\tilde w}$ in $K_w$ with $|\tilde w|=m+2$ where $u_m$ is constant $1$. Therefore
\begin{align*}
||u_m||^2&=\int_{K_w} |u_m|^2 d\mu \\
&\geq \int_{K_{\tilde w}}\underbrace{|u_m|^2}_{=1}d\mu\\
&=\mu(K_{\tilde w})
\end{align*}
$$\Rightarrow \lambda_1^w \leq \frac 6{\kappa_1} \frac{r^{-(m+2)}}{\mu(K_{\tilde w})}$$
For the mass of $m$-cells we have
\begin{align*}
3^m\mu(K_w)&=3^m \frac 12 (\mu_f+\mu_l)(K_w)\\
&\geq \frac 12 3^m \mu_f(K_w)\\
&=\frac 12
\end{align*}
Therefore
\begin{align*} \lambda_1^w &\leq \frac {2\cdot 6}{\kappa_1} (3r^{-1})^{m+2} \\
&= \underbrace{\frac{12(3r^{-1})^2}{\kappa_1}}_{C_l:=}(3r^{-1})^m
\end{align*}
For $x\geq C_l(3r^{-1})$ choose $m\in \mathbb{N}$ such that
$$ C_l(3r^{-1})^m \leq x < C_l(3r^{-1})^{m+1}$$
For these $x$ it holds, that there is at least one eigenvalue smaller than $x$ from $(\mathcal{E}|_{\mathcal{D}_w^0\times \mathcal{D}_w^0},\mathcal{D}_w^0)$:
\begin{align*}
N(\mathcal{E}|_{\mathcal{D}_w^0\times \mathcal{D}_w^0},\mathcal{D}_w^0,x)&\geq 1\\
\Rightarrow \sum_{w \in \mathcal{A}^m}N(\mathcal{E}|_{\mathcal{D}_w^0\times \mathcal{D}_w^0},\mathcal{D}_w^0,x)&\geq 3^m =\frac 13 \left( (r^{-1}3)^{m+1}\right)^{\frac{\ln(3)}{\ln(r^{-1}3)}}\\
&\geq \underbrace{\frac 13 C_l^{\frac{\ln(3)}{\ln(r^{-1}3)}}}_{C_1:=}\cdot x^{\frac{\ln(3)}{\ln(r^{-1}3)}}
\end{align*}

\textbf{II.2 Line part $(\mathcal{E}|_{\mathcal{D}_{e_i^w}^0\times \mathcal{D}_{e_i^w}^0}, \mathcal{D}_{e_i^w}^0)$}\nopagebreak\\[.1cm]
In the previous calculations we saw that the fractal part already gives a lower bound with the same order as the upper bound. Therefore the influence of the line part can not be bigger than the fractal part. We can use the trivial estimate  
\begin{align*}
\sum_{ w\in\mathcal{A}^n, n\leq m-1, i} N(\mathcal{E}|_{\mathcal{D}_{e_i^w}^0\times \mathcal{D}_{e_i^w}^0}, \mathcal{D}_{e_i^w}^0,x)\geq 0
\end{align*}
This suffices to show the desired result.

\section{Generalization}\label{chap7}
If we no longer demand that $r_m\rightarrow r$, we need other conditions.

Let \begin{align*}r^\ast&:=\limsup_{m\rightarrow\infty} r_m<\frac 35 \\r_\ast&:=\liminf_{m\rightarrow\infty} r_m\geq \frac 13 \end{align*}
The conditions on the sequence of matching pairs should be, that the elements of the sequence which are above $r^\ast$ and below $r_\ast$ behave nicely. This could be expressed as
\begin{align*}
\sum_{m: r_m>r^\ast} 1-(r^\ast)^{-1}r_m < \infty\\
\sum_{m: r_m<r_\ast} 1-(r_\ast)^{-1}r_m <\infty
\end{align*}
With this we should get upper and lower estimates on $\delta_m$ and thus for the energy.\\[.2cm]
The conditions are equivalent to
\begin{align*}
0<\prod_{m: r_m>r^\ast}(r^\ast )^{-1}r_m <\infty\\
0<\prod_{m: r_m<r_\ast}(r^\ast )^{-1}r_m <\infty
\end{align*}
Then we get constants $\kappa^\ast,\kappa_\ast$ with
\begin{align*}
1 &\leq \prod_{k\leq m: r_k>r^\ast}(r^\ast )^{-1}r_k\leq \kappa^\ast, \ \forall m\\
\kappa_\ast &\leq \prod_{k\leq m: r_k<r_\ast}(r_\ast )^{-1}r_k\leq 1,\quad \forall m
\end{align*}
Since $r_k>r^\ast$ in the first product we have that $k^\ast >1$ and analogously $k_\ast<1$. With that we get estimates for $\delta_m$.
\begin{align*}
\delta_m&=r_1\cdots r_m\\
&=\left(\prod_{k\leq m: r_k\leq r^\ast} r_k \right)\cdot \left(\prod_{k\leq m : r_k>r^\ast} r_k\right)\\
&\leq (r^\ast)^{\#\{k\leq m: r_k\leq r^\ast\}}\cdot \kappa^\ast (r^\ast)^{\#\{k\leq m : r_k>r^\ast\}}\\
&\leq \kappa_2 (r^\ast)^m
\end{align*}
There is also a bound from below:
\begin{align*}
\delta_m&=r_1\cdots r_m\\
&=\left(\prod_{k\leq m: r_k\geq r_\ast} r_k \right)\cdot \left(\prod_{k\leq m : r_k<r_\ast} r_k\right)\\
&\geq (r_\ast)^{\#\{k\leq m: r_k\geq r_\ast\}}\cdot \kappa_{\ast} (r_\ast)^{\#\{k\leq m : r_k<r_\ast\}}\\
&\geq \kappa_{\ast} (r_\ast)^m
\end{align*}
A quick calculation shows, that these equalities also hold for every product of $m$ different $r_i$.
This leads for $\tilde{\gamma}_k=r_1\cdots r_{m+k-1}\rho_{m+k}$ to
\begin{align*}
\tilde\gamma_k&\leq \mathcal{K}_1(r^\ast)^m \gamma_k
\end{align*}
and
\begin{align*}
(r^\ast)^{-m}\sum_{w\in\mathcal{A}^m} \mathcal{E}_\mathcal{R}^\Sigma(u\circ G_w,u\circ G_w)\leq &\mathcal{E}_\mathcal{R}^\Sigma(u,u)\leq (r_\ast)^{-m}\sum_{w\in\mathcal{A}^m} \mathcal{E}_\mathcal{R}^\Sigma(u\circ G_w,u\circ G_w)\\
\frac 1{\mathcal{K}_1} (r^\ast)^{-m}\sum_{w\in\mathcal{A}^m}\mathcal{E}_\mathcal{R}^I(u\circ G_w,u\circ G_w)\leq &\mathcal{E}_\mathcal{R}^I(u,u)
\end{align*}
for the whole energy with $\mathcal{K}:=\min\{1,1/\mathcal{K}_1\}$
\begin{align*}
\mathcal{E}_\mathcal{R}(u,u)\geq \mathcal{K} (r^\ast)^m \sum_{w\in\mathcal{A}^m} \mathcal{E}_\mathcal{R}(u\circ G_w,u\circ G_w)
\end{align*}
To get these estimates for $r^\ast=\frac 35$ we again need the monotonic decrease of $(\rho_k)_{k\geq 1}$ to get the estimates for all $\tilde \gamma_k$. But then we are in the case where $r_m\rightarrow \frac 35$. \\

These estimates are enough to apply the same proofs as before to get more general results. For the Hausdorff-Dimension in resistance metric we get the following result:\\
\begin{theo} Let $\mathcal{R}=(r_i,\rho_i)_{i\geq 1}$ be a sequence of matching pairs that fulfills the conditions (of chapter \ref{chap7}), then
\begin{align*}
\frac{\ln(3)}{-\ln(r_\ast)}\leq \dim_{H,R_\mathcal{R}}(K)\leq \frac{\ln(3)}{-\ln(r^\ast)}
\end{align*}\label{theo71}
\end{theo}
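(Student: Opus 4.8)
The plan is to revisit the three lemmas of chapter \ref{chap3} and track how the two extremal rates $r^\ast$ and $r_\ast$ enter, so that the single equality $\frac{\ln 3}{-\ln r}$ of Theorem \ref{theo34} splits into a two-sided estimate. The decomposition $\dim_{H,R_\mathcal{R}}(K)=\max\{\dim_{H,R_\mathcal{R}}(\Sigma),\dim_{H,R_\mathcal{R}}(J)\}$ and the bound $\dim_{H,R_\mathcal{R}}(J)\le 1$ of Lemma \ref{lem31} survive verbatim, since their proofs use only the topology and the finiteness of the $\gamma_k$, not the convergence rate of $(r_i)$. Because $r^\ast\ge r_\ast\ge\frac13$ we have $1\le\frac{\ln 3}{-\ln r^\ast}$, so the line part never dominates the upper bound; it therefore suffices to sandwich $\dim_{H,R_\mathcal{R}}(\Sigma)$ between the two stated values.

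For the upper bound I would generalize Lemma \ref{lem32}. Its proof rests only on the cell-wise energy inequality, which in chapter \ref{chap7} takes the form $\mathcal{E}_\mathcal{R}(u,u)\ge\mathcal{K}(r^\ast)^{-m}\sum_{w\in\mathcal{A}^m}\mathcal{E}_\mathcal{R}(u\circ G_w,u\circ G_w)$ (with $(r^\ast)^{-m}$ now playing the rôle of $r^{-m}$). Dropping all but one cell and repeating the argument verbatim yields $d_{R_\mathcal{R}}(\Sigma_w)\le c\,(r^\ast)^{|w|}$. Covering $\Sigma=\bigcup_{w\in\mathcal{A}^m}\Sigma_w$ by these $3^m$ sets gives $\mathcal{H}^s_{R_\mathcal{R}}(\Sigma)\le c^s\,\liminf_m\,(3\,(r^\ast)^s)^m$, which vanishes as soon as $3(r^\ast)^s<1$, i.e. as soon as $s>\frac{\ln 3}{-\ln r^\ast}$. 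Hence $\dim_{H,R_\mathcal{R}}(\Sigma)\le\frac{\ln 3}{-\ln r^\ast}$, and together with Lemma \ref{lem31} this is the claimed upper bound for $K$.

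For the lower bound I would generalize Lemma \ref{lem33}. The test function constructed there has energy at most $6/\delta_m$, and the new estimate $\delta_m\ge\kappa_\ast(r_\ast)^m$ from chapter \ref{chap7} turns the conclusion into: for each fixed $x\in\Sigma$ at most four cells $\Sigma_w$ with $|w|=m$ satisfy $R_\mathcal{R}(x,\Sigma_w)\le c\,(r_\ast)^m$, with $c=\kappa_\ast/6$. I would then run a mass-distribution (Frostman) argument with the self-similar measure $\nu$ determined by $\nu(\Sigma_w)=3^{-|w|}$. Given $\epsilon>0$, choosing $m$ maximal with $\epsilon\le c\,(r_\ast)^m$ forces $B(x,\epsilon)\cap\Sigma$ into at most four level-$m$ cells, so $\nu(B(x,\epsilon))\le 4\cdot 3^{-m}$; writing $3^{-m}=((r_\ast)^m)^{s}$ with $s=\frac{\ln 3}{-\ln r_\ast}$ and using $(r_\ast)^m<\epsilon/(c\,r_\ast)$ gives $\nu(B(x,\epsilon))\le C\epsilon^{s}$. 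The mass-distribution principle then yields $\dim_{H,R_\mathcal{R}}(\Sigma)\ge\frac{\ln 3}{-\ln r_\ast}$.

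Chaining these estimates, $\frac{\ln 3}{-\ln r_\ast}\le\dim_{H,R_\mathcal{R}}(\Sigma)\le\dim_{H,R_\mathcal{R}}(K)\le\frac{\ln 3}{-\ln r^\ast}$, which is the assertion. The whole content is the bookkeeping of the two rates; the main obstacle is attaching the correct extremal rate to each side. The diameter estimate, and hence the upper bound, is governed by the \emph{slowly} contracting cells and so carries $r^\ast$, whereas the separation estimate, and hence the lower bound, is governed by the \emph{fastest} contracting cells and so carries $r_\ast$. No new analytic difficulty arises: every ingredient is a direct transcription of the single-rate arguments, so one may either redo the covering and mass-distribution estimates directly or invoke the two halves of Kigami's criterion \cite[Theo. 2.4]{kig95} with the respective rate on each side.
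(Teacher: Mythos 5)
Your proposal is correct and follows essentially the same route as the paper, which likewise proves the theorem by rerunning Lemma \ref{lem32} with $r^\ast$ (via the energy inequality $\mathcal{E}_\mathcal{R}(u,u)\geq \mathcal{K}(r^\ast)^{-m}\sum_{w\in\mathcal{A}^m}\mathcal{E}_\mathcal{R}(u\circ G_w,u\circ G_w)$) and Lemma \ref{lem33} with $r_\ast$ (via $\delta_m\geq\kappa_\ast(r_\ast)^m$), these being exactly the two halves of Kigami's criterion \cite[Theo.~2.4]{kig95} responsible for the upper and lower bound. Your explicit covering and mass-distribution arguments simply unpack what the paper delegates to that citation, and you attach each extremal rate to the correct side.
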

\begin{proof}  The proofs of Lemma \ref{lem32} and Lemma \ref{lem33} work exactly the same with $r^\ast$ resp. $r_\ast$ instead of $r$. These Lemmata are exactly responsible for the upper and lower bound in the proof of \cite[Theo. 2.4]{kig95}. \end{proof}
The results of chapter \ref{chap5} can also be generalized to these weaker conditions on the sequences of matching pairs. 
\begin{theo} Let $\mathcal{R}=(r_i,\rho_i)_{i\geq 1}$ be a sequence of matching pairs that fulfills the conditions (of Chapter \ref{chap7}), then there exist constants $0<C_1,C_2<\infty$ and $x_0>0$, such that for all $x\geq x_0$:
\begin{align*}
C_1x^{\frac 12 d_{S,1}^{\mathcal{R}}(K)}\leq N_D^{\mu,\mathcal{R}}(x)\leq N_N^{\mu,\mathcal{R}}(x)\leq C_2 x^{\frac 12 d_{S,2}^{\mathcal{R}}(K)}
\end{align*}
with
\begin{align*}
d_{S,1}^\mathcal{R}(K)=\frac{\ln 9}{\ln 3- \ln r_\ast}, \qquad d_{S,2}^\mathcal{R}(K)=\frac{\ln 9}{\ln 3- \ln r^\ast}
\end{align*}
\label{theo72}\end{theo}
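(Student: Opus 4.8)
The plan is to re-run the Dirichlet--Neumann bracketing of the proof of Theorem \ref{theo51} essentially verbatim, substituting the single ratio $r$ by $r^\ast$ in every step of the upper estimate and by $r_\ast$ in every step of the lower estimate. Only two ingredients depend on $\mathcal R$: the cellwise energy comparison and the two-sided control of $\delta_m$. Everything measure-theoretic is untouched, because $\mu=\tfrac12(\mu_l+\mu_f)$ does not involve $\mathcal R$; in particular the estimates $\beta^{|w|}\le\mu(K_w)\le 3^{-|w|}$ and $3^{m}\mu(K_w)\ge\tfrac12$ are $\mathcal R$-independent, and the uniform Poincar\'e inequality still holds because $(K,R_\mathcal R)$ is compact (so $M=\sup R_\mathcal R<\infty$).

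For the upper bound I would first introduce Neumann conditions on $V_m\setminus V_0$ as in Part I, so that $N_N(x)\le N(\mathcal E_{K_m},\mathcal D_{K_m},x)+N(\mathcal E_{J_m},\mathcal D_{J_m},x)$. On the fractal block I would combine the Poincar\'e inequality with the generalized lower scaling estimate of Chapter \ref{chap7}, namely $\mathcal E_\mathcal R(u,u)\ge\mathcal K\,(r^\ast)^{-m}\sum_{w\in\mathcal A^m}\mathcal E_\mathcal R(u\circ G_w,u\circ G_w)$, which reproduces $\lambda^m_{3^m+1}\ge C_u(3/r^\ast)^m$ and hence $N(\mathcal E_{K_m},\mathcal D_{K_m},x)\le C\,x^{\ln 3/\ln(3/r^\ast)}$. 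On the line block I would repeat Part I.2 with $\delta_k\le\kappa_2(r^\ast)^k$ and $\tilde\gamma_k\le\mathcal K_1(r^\ast)^m\gamma_k$, splitting into the cases $\tfrac1{9r^\ast}<\beta$ and $\beta\le\tfrac1{9r^\ast}$; as before the geometric sum $\sum_k\sqrt{9\beta r^\ast}^{\,k}$ is either convergent or of order $3^m$, so the edges never exceed the fractal order. Since $\tfrac12 d_{S,2}^{\mathcal R}(K)=\ln 3/\ln(3/r^\ast)$, this gives the right-hand inequality.

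For the lower bound I would add Dirichlet conditions on $V_m$ as in Part II, yielding $N(\mathcal E_m^0,\mathcal D_m^0,x)\le N_D(x)$ and the orthogonal splitting into cell- and edge-blocks. On each cell block $\mathcal D_w^0$ I would use the same harmonic test function supported on a Dirichlet-free $(m+2)$-cell, whose energy is $6\,\delta_{m+2}^{-1}$, and bound this by $\tfrac{6}{\kappa_\ast}(r_\ast)^{-(m+2)}$ via the lower bound $\delta_m\ge\kappa_\ast(r_\ast)^m$ of Chapter \ref{chap7}. The $L^2$-estimate $\|u_m\|^2\ge\mu(K_{\tilde w})\ge\tfrac12\,3^{-(m+2)}$ is unchanged, so $\lambda_1^w\le C_l(3/r_\ast)^m$ and $\sum_{w\in\mathcal A^m}N(\cdot,x)\ge 3^m\ge C_1\,x^{\ln 3/\ln(3/r_\ast)}$. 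Discarding the edge blocks (trivially $\ge 0$) then gives $N_D(x)\ge C_1\,x^{\frac12 d_{S,1}^{\mathcal R}(K)}$, because $\tfrac12 d_{S,1}^{\mathcal R}(K)=\ln 3/\ln(3/r_\ast)$.

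The genuine difficulty is that the exact self-similar scaling is gone: Lemma \ref{lem21} is no longer an equality, and the surviving comparison is one-sided and governed by two different ratios. One must therefore check carefully that the upper spectral estimate only ever invokes the $(r^\ast)^{-m}$-comparison (for the full energy, fractal and line together), while the lower one uses only $\delta_m\ge\kappa_\ast(r_\ast)^m$, and that $r_\ast\le r^\ast$ forces $d_{S,1}^{\mathcal R}(K)\le d_{S,2}^{\mathcal R}(K)$, so that the two-sided bound is consistent for large $x$. The remaining delicate point is the edge case $r^\ast=\tfrac35$, where the required monotonicity of $(\rho_k)_{k\ge1}$ collapses the problem back into the regime $r_m\to\tfrac35$ of Theorem \ref{theo51}; one therefore assumes $r^\ast<\tfrac35$ throughout.
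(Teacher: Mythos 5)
Your proposal is correct and follows essentially the same route as the paper: the paper's own proof of Theorem \ref{theo72} is precisely the observation that the Dirichlet--Neumann bracketing argument of chapter \ref{chap6} goes through verbatim once $r$ is replaced by $r^\ast$ in the upper estimate and by $r_\ast$ in the lower estimate, using the chapter \ref{chap7} bounds on $\delta_m$, $\tilde\gamma_k$ and the one-sided energy scaling. Your write-up simply fills in those substitutions explicitly (and correctly uses $(r^\ast)^{-m}$ in the energy comparison, fixing the paper's sign typo), so there is nothing to add.
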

\begin{proof} The proof in chapter \ref{chap6} works again if we use the estimates of $\delta_m, \gamma_m$ and $\mathcal{E}_\mathcal{R}$ from above and change $r$ to $r^\ast$ resp. $r_\ast$ for the upper resp. lower bound. \end{proof}
\section*{Acknowledgements}
I would like to thank Prof. Jun Kigami and Dr. Patricia Alonso-Ruiz for fruitful discussions during the Fractals 6 Conference at Cornell. In particular chapter \ref{chap7} developed from these conversations. 

\end{document}